\documentclass[smallextended,referee,envcountsect,]{svjour3}
\smartqed
\usepackage{graphicx}
\usepackage{mathptmx}
\usepackage{latexsym}
\usepackage{amsfonts}
\usepackage{mathrsfs}

\usepackage{amsmath}
\usepackage{amssymb}
\journalname{None}

\newcommand{\Pb}{\mathbb P}
\newcommand{\R}{\mathbb R}
\newcommand{\N}{\mathbb N}

\newcommand{\Z}{\mathbb Z}

\newcommand{\X}{\mathbb X}
\newcommand{\Y}{\mathbb Y}
\newcommand{\Uball}{{\mathbb B}}
\newcommand{\Usfer}{{\mathbb S}}

\newcommand{\dom}{{\rm dom}\, }
\newcommand{\graph}{{\rm gph}\,}

\newcommand{\nullv}{\mathbf{0}}

\newcommand{\cl}{{\rm cl}\, }

\newcommand{\bd}{{\rm bd}\, }
\newcommand{\inte}{{\rm int}\, }

\newcommand{\Rot}{\mathbf{S}\mathbf{O}}    % Group of the rotations

\newcommand{\SFP}{{\rm SFP}}    % "Split" feasibility problem

\newcommand{\MPSFC}{{\rm MPSFC}}    %  mathematica program with "split" feasibility constraints

\newcommand{\Solv}{\Sigma}    % Solution map (multifunction)

\newcommand{\DRC}{{\rm DRC}}    % Dual regularity condition

\newcommand{\mf}{\nu}    %  merit function

\newcommand{\Lin}{{\mathcal L}}   %   Space of linear bounded operators

\newcommand{\Upsubd}{\widehat{\partial}^+}    % Upper subdifferential

\newcommand{\Fsubd}{\widehat{\partial}}    % Fre\'echet subdifferential

\newcommand{\dcone}[1]{{#1}^{{}^\ominus}}   % negative dual (polar)

\newcommand{\Bder}[2]{{\rm D}_B#1(#2)}           % B-derivative in the sense of S.M. Robinson
\newcommand{\supp}[2]{\sigma\left(#1;#2\right)}      %  support function
\newcommand{\ball}[2]{{\rm B}\left(#1;#2\right)}      %  open ball
\newcommand{\cball}[2]{{\rm B}\left[#1;#2\right]}     % closed ball

\newcommand{\dist}[2]{{\rm dist}\left(#1;#2\right)}
\newcommand{\Tang}[2]{{\rm T}(#1;#2)}        % Bouligand tangent cone
\newcommand{\IDis}[2]{{\rm IT}(#1;#2)}        %  cone of interior displacements
\newcommand{\Ncone}[2]{{\rm N}(#1;#2)}       %  convex analysis normal cone

\newcommand{\Gder}[2]{{\rm D}_{\rm T}#1(#2)}       % contingent derivative
\newcommand{\Ider}[2]{{\rm D}_{\rm IT}#1(#2)}       % interior displacement derivative

\newcommand{\Lip}[3]{{\rm Lip}(#1;#2,#3)}      % costant of Aubin property

\begin{document}

\title{First-order approximations of multifunctions defined implicitly
by ``split" feasibility problems with applications to optimization}

\titlerunning{Approximations of multifunctions defined
by ``split" feasibility problems}        % if too long for running head

%\subtitle{}

\author{Amos Uderzo}

\institute{Amos Uderzo \at
              Department of Mathematics and Applications - Università di Milano-Bicocca \\
             Deutsche Bank Building 1 -
              Milan, MI 20125, Italy\\
              amos.uderzo@unimib.it
}

\date{Received: date / Accepted: date}
%The correct dates will be entered by the editor.

\maketitle

{\hfill Version updated: \today}.

\vskip.5cm

\begin{abstract}
In the present paper, a study is made of first-order
differential properties of certain multifunctions, which
are defined as a solution map associated with a family of parameterized
``split" feasibility problems. The latter are a class
of convex feasibility problems, whose specific structure makes them
suitable for well-established applications in several areas of engineering and systems biology.
As a result, inner and outer approximations of the graphical (contingent) derivative
of the solution map are provided, which are expressed in terms of derivatives
of the problem data.
As an application, a perspective is also discussed for employing some of these achievements
in the formulation of optimality conditions for mathematical programs
with ``split" feasibility constraints
\end{abstract}

\keywords{``Split" feasibility problem \and perturbation analysis
\and implicit multifunction \and tangent cones \and graphical derivative
\and mathematical programming}
\subclass{49J53 \and  49K40 \and 65K10 \and 90C25 \and 90C31}

%All acknowledgements should be placed in the back of the paper after Conclusions..

\section{Introduction}

In the present paper, a parameterized version of the ``split" feasibility problem
will be considered in the following setting. Let $(\Pb,\|\cdot\|)$, $(\X,\|\cdot\|)$
and $(\Y,\|\cdot\|)$ be real Banach spaces.
Given a mapping $A:\Pb\times\X\longrightarrow\Y$ and two
set-valued maps $C:\Pb\rightrightarrows\X$ and $Q:\Pb\rightrightarrows\Y$,
any element $p\in \Pb$ determines the particular ``split" feasibility problem
$$
  \hbox{find } x\in C(p):\ A(p,x)\in Q(p).   \leqno (\SFP_p)
$$
The solution set-valued map $\Solv:\Pb\rightrightarrows\X$ associated
with the resulting family of parameterized ``split" feasibility problems
$(\SFP_p)$ with $p$ varying in $\Pb$, is therefore given by
\begin{equation}     \label{eq:Solvpsetchar}
  \Solv(p)=\left\{x\in C(p):\ A(p,x)\in Q(p)\right\}=
  C(p)\cap A(p,\cdot)^{-1}(Q(p)).
\end{equation}

Below, some concrete examples of $(\SFP_p)$ taken from different
mathematical contexts are illustrated. They should demonstrate the
versatility of the problem format.

\begin{example}({\it Feasibility under rotation})   \label{ex:SFPpex1}
Let $\Pb=\R$ and $\X=\Y=\R^2$ be endowed with their standard Euclidean structure.
Let $(\Rot(2),\circ)$ denote the group of rotations of $\R^2$ (a.k.a.
special orthogonal group), whose elements are indicated by $O_p$, i.e.
$$
 O_p=\left(\begin{array}{cc}
            \cos (p) & -\sin (p) \\
            \sin (p) & \cos (p)
          \end{array}\right),\quad\forall p\in\R,
$$
and let $A:\R\times\R^2\longrightarrow\R$ be given by
$$
  A(p,x)=O_p x=\left(\begin{array}{c}
            (\cos p)x_1-(\sin p)x_2 \\
            (\sin p)x_1+(\cos p)x_2
          \end{array}\right),\quad p\in\R,
          \ x=(x_1,x_2)\in\R^2.
$$
For every $p\in\R$, $A(p,\cdot)$ is a linear (and bounded) map,
which is invertible with $A(p,\cdot)^{-1}=A(-p,\cdot)\in\Rot(2)$.
Thus, given $C:\R\rightrightarrows\R^2$ and $Q:\R\rightrightarrows\R^2$,
for any $p\in\R$ the corresponding problem $(\SFP_p)$ amounts to finding common points to the
two subsets $C(p)$ and the rotation by $A(p,\cdot)^{-1}$ of $Q(p)$.
By playing with such a geometric interpretation of this class of
$(\SFP_p)$, one can readily obtain solutions maps exhibiting a rich
variety of behaviours.
For example, if $C(p)=|p|\Usfer$ and $Q(p)=(|p|+1)\Usfer$,
for every $p\in\R$, where $\Usfer$ stands for the unit sphere of $\R^2$,
then one sees that
$\Solv:\R\rightrightarrows\R^2$ is constantly empty-valued.

If $C(p)=[0,+\infty)\times\{0\}$ and $Q(p)=p\Usfer$,
for every $p\in\R$, then one sees that $\Solv:\R\rightrightarrows\R^2$ is
single-valued for every $p\in\R$, being given by $\Solv(p)=\{(|p|,0)\}$.

If $C(p)=[0,+\infty)\times\{0\}$ and $Q(p)=(p\Uball)\cap\R^2_+$,
for every $p\in\R$,  where $\Uball$ stands for the unit ball of $\R^2$,
then the solution map $\Solv:\R\rightrightarrows\R^2$
associated to the corresponding family of $(\SFP_p)$ results in
\begin{eqnarray*}
  \Solv(p)=\left\{\begin{array}{ll}
    [0,|p|]\times\{0\}  , & \qquad\hbox{\ if } p\in\bigcup_{k\in\Z}\left[2k\pi,\ 2k\pi+{\pi\over 2}\right],
   \\
    \{(0,0)\}, & \qquad\hbox{\ otherwise,} \\
  \end{array}
  \right.
\end{eqnarray*}
where $\Z$ stands for the integer number set.
So $\Solv$ periodically takes single as well as set values.
\end{example}

\begin{example}({\it Positive solutions to perturbed linear dynamical systems})   \label{ex:SFPpex2}
Given a compact interval $I\subseteq\R$, let $C^1_n(I)$ denote the space
of all vector-valued functions, with each of their $n$ components belonging to $C^1(I)$
and, similarly, let $C_n(I)$ denote the space of all vector-valued functions,
whose $n$ components belong to $C(I)$. Suppose that both these spaces
are equipped with the respective natural norms, making them Banach spaces.
Let $\Pb=C_{n^2+n}(I)$, $\X=C^1_n(I)$ and $\Y=C_n(I)$. Set $p=(P,b)\in C_{n^2+n}(I)$, where
$$
   P(t)=\left(p_{ij}(t)\right)_{i,j=1}^n,\ \hbox{ with } p_{ij}\in C(I),\
   \forall i,j\in\{1,\dots,n\},
   \quad\hbox{ and }\quad
   b\in C_n(I).
$$
Define the (constant) set-valued map $C$ as being
$$
  C(p)=C^1_n(I)^+=\{x\in C^1_n(I):\ x(t)\in\R^n_+,\ \forall t\in I\},
  \quad\forall p\in C_{n^2+n}(I).
$$
and $Q:C_{n^2+n}(I)\rightrightarrows C_n(I)$ as being the projection (single-valued) map
of the space $C_{n^2+n}(I)$ onto the space $C_n(I)$
$$
  Q(p)=Q(P,b)=\{b\}.
$$
Let $A:C_{n^2+n}(I)\times C^1_n(I)\longrightarrow C_n(I)$ be defined by
$$
  A(p,x)=\dot{x}-Px.
$$
The corresponding family of $(\SFP_p)$ amounts to finding,
among the solutions of the perturbed linear ODE system
\begin{equation}   \label{eq:linearODEs}
  \dot{x}(t)=P(t)x(t)+b(t),
\end{equation}
all those having nonnegative components for every $t\in I$.
In this case, for every $p\in C_{n^2+n}(I)$, $\Solv(p)$ is given by the
intersection of a $n$-dimensional affine subspace of $C^1_n(I)$
(the kernel of $A(p,\cdot)$ translated by a particular solution to
the system in (\ref{eq:linearODEs}) ) with the convex cone $C^1_n(I)^+$.
Such kind of problems emerge in various concrete applications of linear dynamical systems,
where the negativity of solutions causes loss of meaning, from the viewpoint of models.
Notice that if the matrix $P$ has constant entries (so $A$ does not
depend on $p$), constructive methods based on the theory of exponential matrices
are at disposal for the explicit resolution of the linear ODE system in (\ref{eq:linearODEs}).
In stark contrast, if the entries of $P$ depend on $t$, the aforementioned
methods fail to work in general and the explicit resolution becomes a
much harder problem.
Adding the sign condition on $x$ represented by $C$ yields a further
complication of the problem, even when the matrix $P$ is constant.
To see this, consider the case in which $I=[0,2\pi]$, $n=2$, and the matrix
$P$ has two complex eigenvalues with vanishing real part. Then, as it possible
to see from the resulting phase portrait of  (\ref{eq:linearODEs}) in $\R^2$, the solvability of
$(\SFP_p)$  dramatically depends on the perturbing term $b$. For instance,
for every $p=(P,\nullv)$ one has $\Solv(p)=\varnothing$.
\end{example}

Of course, that provided by the formula in (\ref{eq:Solvpsetchar}) is
only a formal, as to say implicit representation of $\Solv(p)$.
It is clear that, whenever the map $A(p,\cdot)$ coincides with the
identity map, problem $(\SFP_p)$ reduces to the so-called ($2$-sets) feasibility
problem, which consists in finding an element in the intersection
of two given subsets. In the special case of two closed subspaces
of a Hilbert space, such problem was firstly considered by J. von Neumann
at the beginnings of the 30s and subsequently extended to a pair of general
convex sets by L.M. Br\`egman \cite{Breg65}.
``Split" feasibility problems were introduced later on in \cite{CenElf94}.
Their peculiar ``split" format, where a function appears to link two
subsets in possibly different spaces, thanks to its versatility revealed
a rich potential in theoretical developments and relevant applications.
Thus these problems became soon a specific topic of research (see, for instance,
\cite{Byrn02,Cegi08,CeElKoBo05,HuXuYe25,HuXuYe25b,QuLiu18,ReTrMa20,WaHuLiYa17,WaGaLiYa24}).

The subject of this paper are first-order differential
properties of the multifunction $\Solv$. More precisely,
the investigations exposed focus on representations and approximations of the
contingent derivative of $\Solv$ at
a given point of its graph.
The reader should notice that one can hardly expect to derive the explicit law
determining the multifunction $\Solv$, as being the solution map to a perturbed
family of involved problems, for which constructive resolution methods may lack
(such a difficult should be evident already considering
the Example \ref{ex:SFPpex2}). In contrast,
a deep-rooted scheme in variational analysis, which was successfully applied
to a large variety of problems (e.g. equality/inequality constraint systems,
variational inequalities and complementarity problems, generalized equations,
equilibrium problems, variational conditions) consists in gleaning qualitative and quantitative information
about the respective solution maps by means of their first-order approximations.
Historically, such an approach led to establishing many implicit multifunction theorems,
suitable for several different contexts (see, among the others,
\cite{ArMoZh23,BorZhu05,DonRoc14,DurStr12,HuNgTh13,LedZhu99,LeTaYe08,LeeYen11,LevRoc94,Levy96,Mord94,Mord06,Robi91}).
Typically, a key element appearing in most of the aforementioned results
is some representation of the derivative (or coderivative) of the solution
map in terms of problem data. This constructive achievement can be
regarded as the {\it punctum saliens} within the so-called sensitivity
analysis of a certain problem.
The quantitative information about the behaviour of the solution map
that is possible to obtain from its first-order approximations turn out
to play an essential role in several contexts, such as robust approaches
to the problem at the issue, value-function/marginal analysis, optimality conditions.
This motivates the present investigations with reference to ``split" feasibility
problems. In synthesis, the aim of the present paper is to complement
the perturbation analysis of ``split" feasibility problems
initiated in \cite{HuXuYe25,HuXuYe25b,Uder26} by providing
insights into a fundamental derivative-like object able to
describe the local geometry of $\Solv$ near a given solution.

%%%%%%%%%%%%%%%%%%%%%%%%%%%%%%%%%%%

The contents of the paper are arranged as follows.
In Section \ref{Sect:2}, the basic notations, standing assumptions
and needed technical preliminaries,
mainly tools from convex and set-valued analysis, are gathered.
Sections \ref{Sect:3} contains the main results established in the paper,
namely first-order approximations from both inside and outside
(in some case, an exact representation) of the contingent derivative of $\Solv$.
In Section \ref{Sect:4}, some of the above results are applied
to deriving necessary optimality conditions in the context of mathematical
programming with ``split" feasibility constraints.

\vskip.5cm

%%%%%%%%%%%%%%%%%%%%%%%%%%%%%%%%%%%%%%%%%%%%%%%%%%%%%%%%%%%%%%%%%%%%%%%%%%%%%%%%%%%%%%%%%%%%%%%%%%%%%%%

\section{Preliminaries}     \label{Sect:2}

\subsection{\bf Basic notations and standing assumptions} \label{Sect:2.1}

The basic notations in use throughout the paper are standard.
$\R$ denotes the field of real numbers, $\N$ denotes its subset of
the natural ones, while $\R^d$ stands for the $d$-dimensional Euclidean space.
Given a (extended) scalar function $\varphi:\X\longrightarrow\R\cup\{\pm\infty\}$
defined on a (real) Banach space $\X$,
$\dom\varphi=\varphi^{-1}(\R)$ denotes its domain and, if $\alpha\in\R$,
$[\varphi\le\alpha]=\{x\in\X\ :\ \varphi(x)\le\alpha\}$ denotes its $\alpha$-sublevel
set, $[\varphi>\alpha]=\{x\in\X:\ \varphi(x)>\alpha\}$
denotes its $\alpha$-strict superlevel set, whereas $[\varphi=\alpha]=\varphi^{-1}(\alpha)$
the $\alpha$-level set.
If $\Omega$ is a subset of $\X$,
$\dist{x}{\Omega}=\inf_{z\in\Omega}\|x-z\|$ denotes that distance of a point $x\in\X$ from
$\Omega$, with the convention that $\dist{x}{\varnothing}=+\infty$.
Consistently, if $r\ge 0$, $\cball{\Omega}{r}=[\dist{\cdot}{\Omega}\le r]$
indicates the closed $r$-enlargement of the set $\Omega$, with radius $r$. In particular, if
$\Omega=\{x\}$, $\cball{x}{r}$ denotes the closed ball with center $x$
and radius $r$.
Similarly, if $r>0$, $\ball{\Omega}{r}=[\dist{\cdot}{\Omega}<r]$ and
$\ball{x}{r}$ stand for the open $r$-enlargement of $\Omega$ and the open ball centered
at $x$, with radius $r$.
The symbols $\cl\Omega$, $\inte\Omega$, and $\bd\Omega$ indicate the topological closure,
the boundary and the interior of $\Omega$, respectively.
The symbol $\nullv$ stands for the null element of any Banach space.
The (topological) dual space of $\X$ is denoted by $\X^*$, with $\langle\cdot,\cdot\rangle
:\X^*\times\X\longrightarrow\R$ denoting their duality pairing and $\nullv^*$ standing for
the zero functional over $\X$. In this context, $\cball{\nullv}{1}$ and $\bd\cball{\nullv}{1}$
(resp. $\cball{\nullv^*}{1}$ and $\bd\cball{\nullv^*}{1}$)
will be simply indicated by $\Uball$ and $\Usfer$ (resp. $\Uball^*$ and $\Usfer^*$ ), respectively.
By $\Lin(\X,\Y)$ the space of all bounded linear operators between $\X$ and $\Y$
is denoted. This space will be equipped with the operator norm, indicated by $\|\cdot\|_{\Lin}$.
Given $A\in\Lin(\X,\Y)$, the map $A^*:\Y^*\longrightarrow\X^*$ indicates the adjoint
operator to $A$, belonging to $\Lin(\Y^*,\X^*)$.
Given a set-valued map $F:\Pb\rightrightarrows \X$ between Banach spaces, $\dom F=
\{p\in \Pb:\ F(p)\ne\varnothing\}$ denotes the effective domain
of $F$, while $\graph F=\{(p,x)\in \Pb\times    X:\ x\in F(p)\}$ its graph.

The acronyms l.s.c. and u.s.c., standing for lower semicontinuous
and upper semicontinuous, respectively, will be adopted whenever needed.

Given a cone $K\subseteq\X$, by $\dcone{K}=\{x^*\in\X^*: \langle x^*,x\rangle
\le 0,\quad\forall x\in K\}$ the dual (negative) cone to $K$ is denoted.
The following calculus rule concerning the dual cone will be used
in the sequel: given $A\in\Lin(\X,\Y)$ and two closed and convex cones
$K\subseteq\X$ and $H\subseteq\Y$, it holds
\begin{equation}   \label{eq:dconecalrule}
  \dcone{\left[K\cap A^{-1}(H)\right]}=\cl\left[\dcone{K}+A^*(\dcone{H})\right]
\end{equation}
(see, for instance, \cite[Lemma 2.4.1]{Schi07}).

Given a nonempty set $\Omega\subseteq\X$, the function $\supp{\cdot}{\Omega}:\X^*\longrightarrow
\R\cup\{+\infty\}$, defined by $\supp{x^*}{\Omega}=
\sup_{x\in\Omega}\langle x^*,x\rangle$, is known in convex analysis as the support
function of $\Omega$.
In Section \ref{Sect:4}, an exact relation ruling
the behaviour of the support function with respect to the intersection of sets
will be needed. Given two closed and convex subsets $\Omega_1$ and $\Omega_2$
of the same Banach space $\X$, suppose that one of the following conditions
is satisfied:
\begin{itemize}
  \item[(i)] $\inte(\Omega_1)\cap\Omega_2\ne\varnothing$;

  \item[(ii)] $\nullv\in\inte(\Omega_1-\Omega_2)$ and $\Omega_2$ is bounded;
\end{itemize}
then the below representation holds
\begin{eqnarray}   \label{eq:suppinter}
  \supp{x^*}{\Omega_1\cap\Omega_2} & = & \supp{x^*}{\Omega_1}\Box\supp{x^*}{\Omega_2} \\
  &=& \inf\{\supp{x_1^*}{\Omega_1}+\supp{x_2^*}{\Omega_2}:\ x^*_1+x_2^*=x^*\},
  \quad\forall x^*\in\X^*,  \nonumber
\end{eqnarray}
where $\Box$ stands for the (infimal) convolution of convex functions
(see \cite[Theorem 4.23]{MorNam22}).

\begin{remark}    \label{rem:concsupF}
It well-known that, if $\Omega\subseteq\Pb$ is any set,
its support function $x^*\mapsto\supp{x^*}{\Omega}$ is a always sublinear,
i.e. positively homogeneous and convex (for more details, see \cite[Section 4.1.2]{MorNam22}).
In view of the subsequent analysis, it is useful to observe also that,
if $F:\Pb\rightrightarrows\X$ is a convex multifunction between Banach spaces,
then the function defined by means of the support function and any fixed $x^*\in\Pb^*$ as
\begin{equation}    \label{map:defsuppFsv}
  u\mapsto \supp{x^*}{F(u)},
\end{equation}
turns out to be concave on $\Pb$. Moreover, whenever $F$ is positively homogeneous,
i.e. $F(tu)=tF(u)$, for every $t>0$ and $u\in\Pb$, then also the function defined
in $(\ref{map:defsuppFsv})$ is positively homogeneous.
\end{remark}

The meaning of some further symbols employed in the subsequent sections
will be explained contextually to their introduction.

%%%%%%%%%%%%%%%%%%%%%%%%%%%%%%%%%%%%%%%%%%%

Throughout the paper, with reference to a family $(\SFP_p)$, the following assumptions on the problem data
are maintained:

\begin{itemize}

\item[($a_1$)] the set-valued maps $C:\Pb\rightrightarrows\X$ and $Q:\Pb\rightrightarrows\Y$ take
nonempty, closed and convex values;

\item[($a_2$)] $A(p,\cdot)\in\Lin(\X,\Y)$, for every $p\in\Pb$.

\end{itemize}

As a comment, notice that the above assumptions mean that, for any value of the parameter $p\in\Pb$,
the corresponding instance of $(\SFP_p)$ satisfies the standard assumptions of
the original ``split" feasibility problem (see \cite{CenElf94}). On the other hand, the assumption ($a_2$)
allows for a possibly nonlinear parameter dependence of $A(p,\cdot)$, as it
happens for the instance of $(\SFP_p)$ considered in Example \ref{ex:SFPpex1}.

%%%%%%%%%%%%%%%%%%%%%%%%%%%%%%%%%%%%%

\subsection{\bf Local cone approximations of sets and graphical derivatives
of multifunctions}   \label{Sect:2.2}

A subset of a Banach space can be locally approximated by a cone
near one of its points in several ways. For the purposes of the
present analysis, the following two notions of cone approximation
have to be recalled. Given a set $\Omega\subseteq\X$ and
$x_0\in\Omega$, the set
$$
  \Tang{\Omega}{x_0}=\left\{v\in\X:\
  \forall\delta>0,\ \exists v'\in\ball{v}{\delta},\ \exists t\in (0,\delta):\
  x_0+tv'\in\Omega \right\}
$$
is called Bouligand tangent (contingent) cone to $\Omega$ at $x_0\in\Omega$;
the set
$$
  \IDis{\Omega}{x_0}=\left\{v\in\X:\
  \exists\delta>0:\ \forall v'\in\ball{v}{\delta},\ \forall t\in (0,\delta),\
  x_0+tv'\in\Omega \right\}
$$
is called cone of interior displacements to $\Omega$ at $x_0\in\Omega$.
Of course, both of the above sets are actually cones. Whereas the contingent
cone is known to be always closed and nonempty, the cone of interior
displacement may happen to be empty and typically fails to be closed.
Moreover, $\Tang{\Omega}{x_0}$ is convex if $\Omega$ does the same.
By relying on set-convergence concepts, the former notion can be equivalently
defined as an upper limit of a proper quotient involving the set $\Omega$
(see, for more details, \cite{AubFra90,DonRoc14,RocWet98}).

By exploiting the distance function $x\mapsto\dist{x}{\Omega}$ associated with
a given subset $\Omega$ of a Banach space, the following characterization of the Bouligand
tangent cone can be formulated in terms of generalized
derivative (see \cite[Definition 4.1.1]{AubFra90})
$$
  \Tang{\Omega}{x_0}=\left\{v\in\X:\
  \liminf_{t\downarrow 0}\frac{\dist{x_0+tv}{\Omega}}{t}=0\right\}.
$$

Subsets of a Banach space can be also locally approximated near one of their points
by cones in the dual space. Following a fundamental approach in convex analysis,
given $\Omega\subseteq\X$ and $x_0\in\Omega$, the convex cone
$$
   \Ncone{\Omega}{x_0}=\left\{x^*\in\X^*:\ \langle x^*,x-x_0\rangle\le 0,
   \quad\forall x\in\Omega\right\}.
$$
is called normal cone to $\Omega$ at $x_0\in\Omega$.

As in the case of single-valued maps, derivatives of set-valued maps
locally approximate them near a given point. Several approaches have
been proposed to the differentiation of multifunctions. The graphical
approach relies on local cone approximation in the product space
of the graph of a given multifunction. More formally,
given $F:\X\rightrightarrows\Y$ and $(x_0,y_0)\in\graph F$, the
contingent (graphical) derivative of $F$ at $(x_0,y_0)$ is the set-valued map
$\Gder{F}{x_0,y_0}:\X\rightrightarrows\Y$ defined via the cone approximation
of its graph as
$$
  \graph\Gder{F}{x_0,y_0}=\Tang{\graph F}{(x_0,y_0)}.
$$
Analogously, the
interior-displacement (graphical) derivative of $F$ at $(x_0,y_0)$ is the set-valued map
$\Ider{F}{x_0,y_0}:\X\rightrightarrows\Y$ defined via the cone approximation
of its graph as
$$
  \graph\Ider{F}{x_0,y_0}=\IDis{\graph F}{(x_0,y_0)}.
$$
From the very definition it is clear that $\Gder{F}{x_0,y_0}$ is a closed
and positively homogeneous multifunction.
The next lemma provides descriptions of elements in the above graphical
derivatives, which will be useful in what follows.

\begin{lemma}    \label{lem:grphderdist}
Given a set-valued map $F:\X\rightrightarrows\Y$ between normed spaces
and $(x_0,y_0)\in\graph F$, then for every $u\in\X$
\begin{itemize}
  \item[(i)] \begin{equation}     \label{in:Gderscarep}
  v\in \Gder{F}{x_0,y_0}(u) \quad\hbox{ iff }\quad
  \liminf_{u' \to u \atop t\downarrow 0} \frac{\dist{y_0+tv}{F(x_0+tu')}}{t}=0.
\end{equation}
  \item[(ii)] \begin{equation}     \label{in:Iderincl}
  \quad\hbox{ if } v\in \Ider{F}{x_0,y_0}(u) \quad\hbox{ then }\quad
  \lim_{u' \to u \atop t\downarrow 0} \frac{\dist{y_0+tv}{F(x_0+tu')}}{t}=0.
\end{equation}
\end{itemize}
\end{lemma}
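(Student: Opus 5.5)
Both parts follow by unwinding the definitions of $\Tang{\cdot}{\cdot}$ and $\IDis{\cdot}{\cdot}$ applied to $\Omega=\graph F$ at $(x_0,y_0)$, with the product space $\X\times\Y$ carrying a product norm, say $\|(u,v)\|=\max\{\|u\|,\|v\|\}$. Other equivalent norms only rescale the radii $\delta$ and do not affect the limits.

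For (i), the forward implication goes as follows. Let $v\in\Gder{F}{x_0,y_0}(u)$, i.e. $(u,v)\in\Tang{\graph F}{(x_0,y_0)}$. For each $n$, taking $\delta=1/n$ in the definition gives $(u_n,v_n)$ with $\|(u_n,v_n)-(u,v)\|<1/n$ and $t_n\in(0,1/n)$ such that $y_0+t_nv_n\in F(x_0+t_nu_n)$. Then
$$\dist{y_0+t_nv}{F(x_0+t_nu_n)}\le t_n\|v_n-v\|<t_n/n.$$
Since $u_n\to u$ and $t_n\downarrow0$, the quotient tends to $0$. The liminf is nonnegative, so it equals $0$.

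For the converse, assume the liminf is $0$. Then there are $u_n\to u$ and $t_n\downarrow 0$ with $\dist{y_0+t_nv}{F(x_0+t_nu_n)}<t_n/n$. In particular $F(x_0+t_nu_n)\neq\varnothing$, because of the convention $\dist{\cdot}{\varnothing}=+\infty$, so we may pick $y_n\in F(x_0+t_nu_n)$ with $\|y_n-y_0-t_nv\|<t_n/n$. Setting $v_n=(y_n-y_0)/t_n$ gives $v_n\to v$ and $(x_0,y_0)+t_n(u_n,v_n)\in\graph F$. Given any $\delta>0$, choosing $n$ large yields $(u_n,v_n)\in\ball{(u,v)}{\delta}$ and $t_n<\delta$, so $(u,v)$ belongs to the contingent cone.

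For (ii), let $(u,v)\in\IDis{\graph F}{(x_0,y_0)}$ with witness $\delta>0$. Take any $u'$ with $\|u'-u\|<\delta$ and any $t\in(0,\delta)$. Then $(u',v)\in\ball{(u,v)}{\delta}$, so $(x_0+tu',y_0+tv)\in\graph F$, that is, $y_0+tv\in F(x_0+tu')$. Hence the distance quotient vanishes identically on the neighbourhood $\{\|u'-u\|<\delta\}\times(0,\delta)$, and the full limit exists and equals $0$.

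There is no real obstacle. The only points requiring care are the convention $\dist{\cdot}{\varnothing}=+\infty$, which guarantees nonemptiness of $F(x_0+t_nu_n)$ in the converse of (i), and the choice of product norm, which affects only the radii and not the conclusions.
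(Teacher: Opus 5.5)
Your proof is correct, and your part (ii) is exactly the paper's argument. For part (i), the paper only rewrites the quotient as $\dist{v}{t^{-1}(F(x_0+tu')-y_0)}$ and then cites the known characterization of the contingent derivative (Aubin--Frankowska, Proposition 5.1.4). Your sequential unwinding of the definition proves that characterization directly. This makes your argument self-contained and brings out two points the citation hides: the role of the convention $\dist{\cdot}{\varnothing}=+\infty$, and the irrelevance of the choice of product norm.
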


\begin{proof}
(i) It suffices to observe that, by elementary properties of the distance
function induced by a norm, it holds
$$
  \frac{\dist{y_0+tv}{F(x_0+tu')}}{t}=\dist{v}{\frac{F(x_0+tu')-y_0}{t}},
  \quad\forall u'\in\X,\ \forall t>0,
$$
and then to apply the characterization in \cite[Proposition 5.1.4]{AubFra90}.

(ii) Take an arbitrary $v\in \Ider{F}{x_0,y_0}(u)$. According to the definition of
interior-displacement derivative, one has $(u,v)\in\IDis{\graph F}{(x_0,y_0)}$.
This fact means that there exists $\delta>0$ such that
\begin{eqnarray*}
  (x_0,y_0)+t(u',v') &=& (x_0+tu',y_0+tv') \in \graph F,  \\
  & & \forall (u',v')\in\ball{u}{\delta}\times\ball{v}{\delta},\
  \forall t\in (0,\delta).
\end{eqnarray*}
From the last inclusion, in particular, it follows
$$
  y_0+tv\in F(x_0+tu'),\quad\forall (u',t)\in \ball{u}{\delta}\times (0,\delta),
$$
whence one can derive immediately
$$
\lim_{u' \to u \atop t\downarrow 0} \frac{\dist{y_0+tv}{F(x_0+tu')}}{t}=0.
$$
\smartqed\qed
\end{proof}

Let $\Omega\subseteq\Y$ be a closed convex set, let $y_0\in\Omega$
and let $A\in\Lin(\X,\Y)$ be such that $\nullv\in\inte\left[A(\X)-\Omega\right]$.
Then, for each $x_0\in A^{-1}(y_0)$ the following representation is known to hold
\begin{equation}   \label{eq:A-1Tang}
  \Tang{A^{-1}(\Omega)}{x_0}=A^{-1}(\Tang{\Omega}{y_0})
\end{equation}
(see, for instance, \cite[Chapter 4.2]{AubFra90}). Notice that the above qualification
condition takes place, in particular, whenever $A$ is onto.

\begin{lemma}   \label{lem:A-1Q}
Let $Q:\Pb\rightrightarrows\Y$ be a closed convex multifunction and
let $(p_0,y_0)\in\graph Q$. If $A\in\Lin(\X,\Y)$ is onto and $x_0\in
A^{-1}(y_0)$, the set-valued map $A^{-1}\circ Q:\Pb\rightrightarrows\X$
is closed and convex and it holds
$$
  \Gder{(A^{-1}\circ Q)}{p_0,x_0}=A^{-1}\circ \Gder{Q}{p_0,y_0},
$$
where $A^{-1}:\Y\rightrightarrows\X$ denotes the inverse multifunction to
$A$.
\smartqed\qed
\end{lemma}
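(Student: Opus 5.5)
The plan is to pass to graphs. The graph of $A^{-1}\circ Q$ is the preimage of $\graph Q$ under a bounded linear operator, so the tangent-cone formula (\ref{eq:A-1Tang}) applies in the product space.

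First, the structural claims. By definition,
$$
\graph(A^{-1}\circ Q)=\{(p,x)\in\Pb\times\X:\ Ax\in Q(p)\}=\mathcal{A}^{-1}(\graph Q),
$$
where $\mathcal{A}:\Pb\times\X\longrightarrow\Pb\times\Y$ is given by $\mathcal{A}(p,x)=(p,Ax)$. Clearly $\mathcal{A}\in\Lin(\Pb\times\X,\Pb\times\Y)$. Since $Q$ is a closed convex multifunction, $\graph Q$ is closed and convex. The preimage of a closed convex set under a continuous linear map is closed and convex, so $A^{-1}\circ Q$ is closed and convex.

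Next, the qualification condition. Since $A$ is onto, so is $\mathcal{A}$: given $(p,y)$, pick $x$ with $Ax=y$, and then $\mathcal{A}(p,x)=(p,y)$. Hence $\mathcal{A}(\Pb\times\X)-\graph Q=\Pb\times\Y$, which has $\nullv$ in its interior. We also have $(p_0,y_0)\in\graph Q$ and $(p_0,x_0)\in\mathcal{A}^{-1}(p_0,y_0)$, because $Ax_0=y_0$. Formula (\ref{eq:A-1Tang}), applied with $\Omega=\graph Q$ and the operator $\mathcal{A}$, then gives
$$
\Tang{\graph(A^{-1}\circ Q)}{(p_0,x_0)}=\mathcal{A}^{-1}\big(\Tang{\graph Q}{(p_0,y_0)}\big).
$$

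Finally, we unwind the definitions. The pair $(u,w)$ lies in the left-hand side iff $w\in\Gder{(A^{-1}\circ Q)}{p_0,x_0}(u)$. It lies in the right-hand side iff $(u,Aw)\in\graph\Gder{Q}{p_0,y_0}$, that is, iff $Aw\in\Gder{Q}{p_0,y_0}(u)$, that is, iff $w\in A^{-1}\big(\Gder{Q}{p_0,y_0}(u)\big)$. This is exactly the claimed identity.

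The only genuinely delicate point is verifying that (\ref{eq:A-1Tang}) is legitimately applicable in the product space. This requires that $\graph Q$ be closed and convex and that $\mathcal{A}$ be surjective, and both hold here. If one wished to avoid citing (\ref{eq:A-1Tang}), the inclusion "$\subseteq$" follows directly from the continuity of $\mathcal{A}$. The reverse inclusion would require a Robinson--Ursescu or open-mapping estimate of the form $\dist{x}{A^{-1}(y)}\le c\|Ax-y\|$ to lift approximating sequences, and that is where the surjectivity of $A$ is essential.
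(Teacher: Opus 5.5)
Your proposal is correct and follows the paper's proof: the paper also defines $\mathcal{A}(p,x)=(p,Ax)$, identifies $\graph(A^{-1}\circ Q)=\mathcal{A}^{-1}(\graph Q)$, uses the surjectivity of $\mathcal{A}$ to get the qualification condition, applies (\ref{eq:A-1Tang}) in $\Pb\times\Y$, and then unwinds the graphs. The only cosmetic difference is that you get closedness and convexity directly from the preimage representation, whereas the paper argues through composition of convex multifunctions and continuity of $A$.
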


\begin{proof}
Observe that in the current setting $A^{-1}:\Y\rightrightarrows\X$ is
a closed convex process. Therefore, as a composition of convex multifunctions,
$A^{-1}\circ Q$ is convex. Moreover, as $A$ is continuous and $Q$ is closed,
$A^{-1}\circ Q$ turns out to be closed. Therefore, $\graph(A^{-1}\circ Q)$
is a closed convex subset of $\Pb\times\X$.

Now, consider the map $\mathcal{A}:\Pb\times\X\longrightarrow\Pb\times\Y$,
defined by
$$
  \mathcal{A}(p,x)=(p,A(x)).
$$
This map is linear and bounded, because so is $A$, namely $\mathcal{A}\in\Lin(\Pb\times\X,\Pb\times\Y)$.
Moreover, $\mathcal{A}$ is onto, because $A$ is onto. Observe that
\begin{equation}     \label{eq:graphA-1graphQ}
  \graph(A^{-1}\circ Q)=\mathcal{A}^{-1}(\graph Q).
\end{equation}
Indeed, if $(p,x)\in\graph(A^{-1}\circ Q)$, then $A(x)\in Q(p)$, so that
$(p,A(x))\in\graph Q$, and hence, according to the definition of $\mathcal{A}$,
it is $(p,x)\in \mathcal{A}^{-1}(\graph Q)$,
while all these implications can be reverted.
By employing the representation in (\ref{eq:A-1Tang}), which can be applied
here as $\graph Q$ is closed and convex, $\mathcal{A}\in\Lin(\Pb\times\X,\Pb\times\Y)$
is onto, so $(\nullv,\nullv)\in\inte\bigl[\mathcal{A}(\Pb\times\X)-\graph Q\bigl]$,
on account of the equality in (\ref{eq:graphA-1graphQ}) one obtains
\begin{eqnarray*}
  \graph\left(\Gder{(A^{-1}\circ Q)}{p_0,x_0}\right) &=& \Tang{\graph(A^{-1}\circ Q)}{(p_0,x_0)} \\
   &=& \Tang{ \mathcal{A}^{-1}(\graph Q)}{(p_0,x_0)})  \\
   &=& \mathcal{A}^{-1}(\Tang{\graph Q}{(p_0,y_0)}) \\
   &=& \mathcal{A}^{-1}(\graph\Gder{Q}{p_0,y_0})  \\
   &=& \graph (A^{-1}\circ \Gder{Q}{p_0,y_0}),
\end{eqnarray*}
which proves the equality in the thesis.
\smartqed\qed
\end{proof}

%%%%%%%%%%%%%%%%%%%%%%%%%%%%%%%%%%%%%%%%%%%%%%%%%%

\subsection{\bf Dual approximations of scalar functions}     \label{Sect:2.3}

Let $\varphi:\X\longrightarrow\R\cup\{\pm\infty\}$ be a function defined on
a Banach space and let $x_0\in\dom\varphi$. Following \cite[Chapter 1.3]{Mord06},
the subset of $\X^*$
$$
   \Upsubd\varphi(x_0)=\left\{v^*\in\X^*:\ \limsup_{x\to x_0}
   {\varphi(x)-\varphi(x_0)-\langle v^*,x-x_0\rangle\over\|x-x_0\|}
   \le 0\right\}
$$
is called (Fr\'echet) upper subdifferential of $\varphi$ at $x_0$.
It is useful to recall that if $\varphi$ is concave, then
$\Upsubd\varphi(x_0)$ coincides with the superdifferential (a.k.a. upper subdifferential)
of $\varphi$ at $x_0$ in the sense of
convex analysis. If $\varphi$ is Fr\'echet differentiable at $x_0$, then $\Upsubd\varphi(x_0)=
\{\nabla\varphi(x_0)\}$. In general, the upper subdifferential
of a function $\varphi$ at $x_0$ is connected with the Fr\'echet subdifferential
of $-\varphi$ at $x_0$ (here denoted by $\Fsubd(-\varphi)(x_0)$) by the following relation
$$
   \Upsubd\varphi(x_0)=-\Fsubd\left(-\varphi\right)(x_0).
$$
Such a relation enables one to derive from \cite[Theorem 1.88(i)]{Mord06}
the following variational description of $\Upsubd\varphi(x_0)$ that
will be useful in the sequel:
for every $v^*\in\Upsubd\varphi(x_0)$ there is a function
$\varsigma:\X\longrightarrow\R$, with $\varsigma(x_0)=\varphi(x_0)$
and $\varsigma(x)\ge \varphi(x)$ for every $x\in\X$, such that $\varsigma$
is Fr\'echet differentiable at $x_0$ with $\nabla\varsigma(x_0)=v^*$.

%%%%%%%%%%%%%%%%%%%%%%%%%%%%%%%%%%%%%%%%%%%%%%%%%%%%%

\subsection{\bf Error bounds for parameterized ``split" feasibility problems} \label{Sect:2.4}

The main result of the paper will be established by employing
an error bound estimate for $(\SFP_p)$. Estimates of this kind often requires
a qualification condition to hold.
In order to formulate a qualification condition, which is suitable to the present
context, one needs to introduce the following quantities, which are
expressed in terms of normal cone constructions.
First of all, define the residual function $\mf:\Pb\times\X\longrightarrow [0,+\infty)$
associated to $(\SFP_p)$ as being
$$
  \mf(p,x)=\dist{A(p,x)}{Q(p)}+\dist{x}{C(p)}.
$$
It is clear that $\mf$ measures the violation for $x$ to fulfil
both the conditions $x\in\ C(p)$ and $A(p,x)\in\ Q(p)$.
More precisely, the standing assumption $(a_1)$ ensures that
$x\in\Solv(p)$ iff $\mf(p,x)=0$.
Then, given any $\delta>0$ and any pair $(p,x)\in \Pb\times\X$, set by convenience $\mf_{A,Q}(p,x)=\dist{A(p,x)}{Q(p)}$,
$\mf_C(p,x)=\dist{x}{C(p)}$, $d_{A,Q}=\dist{A(p,x)}{Q(p)}$ and $d_C=\dist{x}{C(p)}$.
On the base of these notations, define
\begin{eqnarray}   \label{eq:deftauAQ}
    \tau_{A,Q}(\delta)=\inf\bigg\{\|x^*\|:\ & x^*\in A(p,\cdot)^*\left(\Ncone{A(p,x)}{\cball{Q(p)}{d_{A,Q}})}\cap\Usfer^*\right) \nonumber \\
     & +[\Ncone{x}{\cball{C(p)}{d_C}}\cap\Uball^*],  \nonumber \\
    & (p,x)\in [\mf_{A,Q}>0]\cap\left[\ball{\bar p}{\delta}\times\ball{\bar x}{\delta}\right]\bigg\}
\end{eqnarray}
and
\begin{eqnarray}   \label{eq:deftauC}
   \tau_C(\delta)=\inf\bigg\{ & \|x^*\|:\  x^*\in A(p,\cdot)^*\left(\Ncone{A(p,x)}{Q(p)}\cap\Uball^*\right)
      +[\Ncone{x}{\cball{C(p)}{d_C}}\cap\Usfer^*],  \nonumber \\
    & (p,x)\in [\mf_{A,Q}=0]\cap[\mf_{C}>0]\cap\left[\ball{\bar p}{\delta}\times\ball{\bar x}{\delta}\right]\bigg\}.
\end{eqnarray}

In the above setting, a family $(\SFP_p)$ is said to satisfy the {\it dual regularity condition}
(for short, \DRC) near $(\bar p,\bar x) \in\graph\Solv$  if
$$
  \exists\delta>0:\ \tau(\delta)=\min\{\tau_{A,Q}(\delta),\,
  \tau_C(\delta)\}>0.   \leqno(\DRC)
$$

\vskip.5cm

\begin{theorem}(\cite[Theorem 3.1]{Uder26})    \label{thm:Solverbo}
With reference to a family $(\SFP_p)$, let $(\bar p,\bar x)\in
\graph\Solv$. Suppose that:
\begin{itemize}
  \item[(i)]  $C:\Pb\rightrightarrows\X$ is l.s.c. at $(\bar p,\bar x)$;

  \item[(ii)] $Q:\Pb\rightrightarrows\Y$ is l.s.c. at $\left(\bar p,A(\bar p,\bar x)\right)$;

  \item[(iii)] $A(\cdot,\bar x):\Pb\longrightarrow\Y$ is continuous at $\bar p\in\Pb$;

  \item[(iv)] the qualification condition $(\DRC)$ holds near $(\bar p,\bar x)$.

\end{itemize}
Then, there exist $\eta,\, \zeta>0$ such that:
\begin{itemize}
  \item[(t)] $\Solv(p)\cap\cball{\bar x}{\eta}\ne\varnothing,
  \quad\forall p\in\cball{\bar p}{\zeta}$;

  \item[(tt)] the following estimate holds
\begin{equation}    \label{in:serbo}
  \dist{x}{\Solv(p)}\le\tau(\delta)^{-1}\mf(p,x),\quad\forall (p,x)\in
  \cball{\bar p}{\zeta}\times\cball{\bar x}{\eta}.
\end{equation}
\end{itemize}
\end{theorem}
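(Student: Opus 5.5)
The plan is to derive the error bound from Ekeland's variational principle applied to the residual $\mf$, with (DRC) supplying a uniform lower bound on the slope of $\mf(p,\cdot)$ away from $\Solv(p)$. Conclusion (t) will then follow from (tt) together with the lower semicontinuity assumptions.

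\emph{Step 1: continuity of $\mf$ at $(\bar p,\bar x)$.} Since $\mf(\bar p,\bar x)=0$, we need $\mf(p,x)\to 0$ as $(p,x)\to(\bar p,\bar x)$. For fixed $p$, the map $x\mapsto\mf(p,x)$ is convex and Lipschitz, with constant $\|A(p,\cdot)\|_{\Lin}+1$. Assumptions (i)--(iii) give upper semicontinuity of $\mf(\cdot,\bar x)$ at $\bar p$:
\begin{itemize}
\item $\dist{\bar x}{C(p)}\to 0$ by (i);
\item $\dist{A(p,\bar x)}{Q(p)}\le \|A(p,\bar x)-A(\bar p,\bar x)\|+\dist{A(\bar p,\bar x)}{Q(p)}\to 0$ by (ii) and (iii).
\end{itemize}
Varying $x$ also requires local boundedness of $\|A(p,\cdot)\|_{\Lin}$ near $\bar p$. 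This is not among the listed hypotheses, so I expect to need it implicitly. Alternatively, one can apply (tt) only along the fixed point $\bar x$ when deriving (t).

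\emph{Step 2: slope estimate.} Fix $p$ near $\bar p$ and $x$ near $\bar x$ with $\mf(p,x)>0$. By convex subdifferential calculus, $\partial_x\mf(p,x)$ equals
\[
A(p,\cdot)^*\partial\dist{\cdot}{Q(p)}(A(p,x))+\partial\dist{\cdot}{C(p)}(x).
\]
At a point outside a closed convex set $\Omega$, the subdifferential of $\dist{\cdot}{\Omega}$ is $\Ncone{\cdot}{\cball{\Omega}{d}}\cap\Usfer^*$, where $d$ is the current distance. At a point inside $\Omega$ it is $\Ncone{\cdot}{\Omega}\cap\Uball^*$.
\begin{itemize}
\item If $\mf_{A,Q}>0$, every subgradient lies in the set appearing in $\tau_{A,Q}(\delta)$. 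Here the $C$-part is $\Ncone{x}{\cball{C(p)}{d_C}}\cap\Uball^*$, which covers both $d_C>0$ and $d_C=0$.
\item If $\mf_{A,Q}=0$ and $\mf_C>0$, every subgradient lies in the set of $\tau_C(\delta)$.
\end{itemize}
Hence every subgradient has norm at least $\tau:=\tau(\delta)$, as long as $(p,x)$ stays in $\ball{\bar p}{\delta}\times\ball{\bar x}{\delta}$.

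\emph{Step 3: Ekeland's principle.} Choose $\zeta,\eta$ small so that $\mf(p,x)<\tau\cdot(\delta/2)$, say, on $\cball{\bar p}{\zeta}\times\cball{\bar x}{\eta}$, and so that $\eta+\mf/\tau<\delta$. Suppose (tt) failed at some $(p,x)$. Pick $\lambda<\dist{x}{\Solv(p)}$ with $\mf(p,x)<\tau\lambda$, and apply Ekeland's principle to $\mf(p,\cdot)$ with $\varepsilon=\mf(p,x)$ and this $\lambda$. This yields $w$ with
\[
\|w-x\|\le\lambda \quad\hbox{and}\quad \mf(p,\cdot)+(\varepsilon/\lambda)\|\cdot-w\| \hbox{ minimized at } w.
\]
Then $\mf(p,w)>0$, since $\|w-x\|\le\lambda<\dist{x}{\Solv(p)}$, and $w\in\ball{\bar x}{\delta}$. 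The optimality condition gives a subgradient of norm at most $\varepsilon/\lambda<\tau$, contradicting Step 2. Letting $\lambda\uparrow\dist{x}{\Solv(p)}$ gives the estimate; the case $\Solv(p)=\varnothing$ is handled identically with any large $\lambda$.

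\emph{Step 4: conclusion (t).} Apply (tt) at $x=\bar x$. Step 1 gives $\dist{\bar x}{\Solv(p)}\le\tau^{-1}\mf(p,\bar x)\to 0$, so $\Solv(p)\cap\cball{\bar x}{\eta}\ne\varnothing$ after shrinking $\zeta$.

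The main obstacle is the bookkeeping of radii in Step 3: the Ekeland point $w$ must remain inside the $\delta$-neighbourhood where (DRC) applies, and this in turn requires controlling $\mf(p,x)$ uniformly near $(\bar p,\bar x)$.
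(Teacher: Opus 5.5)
This is a genuine gap, but a repairable one. Steps 2 and 3 contain the right mechanism. The distance-function subdifferential formulas turn $(\DRC)$ into the bound $\|x^*\|\ge\tau:=\tau(\delta)$ for every $x^*\in\partial_x\mf(p,x)$ with $\mf(p,x)>0$ in the open $\delta$-box. Ekeland's principle then gives a contradiction, provided the Ekeland point $w$ lies in $\ball{\bar x}{\delta}$. (The paper only quotes this theorem from \cite{Uder26}, so there is no in-paper proof to compare with.)

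The gap is in how you keep $w$ there. Step 3 assumes $\mf(p,x)<\tau\delta/2$ uniformly on $\cball{\bar p}{\zeta}\times\cball{\bar x}{\eta}$, and, as you suspected, (i)--(iv) do not give this. Take $\Pb=\X=\Y=\R$, $C(p)=\R$, $Q(p)=\{0\}$, $A(p,x)=x/|p|$ for $p\neq 0$ and $A(0,x)=x$. All hypotheses hold at $(\bar p,\bar x)=(0,0)$, with $\tau(\delta)=1$ for $\delta\le 1$. Yet $\mf(p,x)=|x|/|p|$ is unbounded on every box. For such pairs you need $\lambda>\tau^{-1}\mf(p,x)$, which may exceed $\delta$. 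Then nothing keeps $w$ in the region where $(\DRC)$ applies, and no contradiction follows. Your fallback of working only at $\bar x$ yields (t) but not (tt) on the full box. Adding local boundedness of $\|A(p,\cdot)\|_{\Lin}$ would prove a weaker statement than the one claimed.

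The missing idea is a case distinction, with (t) proved \emph{before} (tt).

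First, fix $\eta<\delta/4$. Using Step 1 only at the point $\bar x$, choose $\zeta<\delta$ with $\mf(p,\bar x)<\tau\eta$ on $\cball{\bar p}{\zeta}$.

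Second, your Ekeland argument proves a local claim: if $p\in\ball{\bar p}{\delta}$ and $\|x-\bar x\|+\tau^{-1}\mf(p,x)<\delta$, then $\dist{x}{\Solv(p)}\le\tau^{-1}\mf(p,x)$. Here $\lambda$ must lie in $(\tau^{-1}\mf(p,x),\min\{\dist{x}{\Solv(p)},\delta-\|x-\bar x\|\})$, i.e. just above $\tau^{-1}\mf(p,x)$. So drop ``letting $\lambda\uparrow\dist{x}{\Solv(p)}$'' and ``any large $\lambda$'' for the case $\Solv(p)=\varnothing$. The contradiction itself yields the estimate and also rules out $\Solv(p)=\varnothing$.

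Third, applying the claim at $x=\bar x$ gives $\dist{\bar x}{\Solv(p)}<\eta$ for all $p\in\cball{\bar p}{\zeta}$, which is (t).

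Finally, take $x\in\cball{\bar x}{\eta}$. If $\tau^{-1}\mf(p,x)<\delta-\eta$, the local claim applies directly. Otherwise
$\dist{x}{\Solv(p)}\le\|x-\bar x\|+\dist{\bar x}{\Solv(p)}<2\eta<\delta-\eta\le\tau^{-1}\mf(p,x)$, so (tt) holds trivially. No bound on $\|A(p,\cdot)\|_{\Lin}$ is needed.
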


\begin{remark}
Notice that as a byproduct of Theorem \ref{thm:Solverbo}, one obtains
the local solvability of $(\SFP_p)$ for every $p$ in a neighbourhood of $\bar p$,
along with the lower semicontinuity of $\Solv$ at $(\bar p,\bar x)$.
In particular, $\bar p\in\inte(\dom\Solv)$.
\end{remark}

\vskip.5cm

%%%%%%%%%%%%%%%%%%%%%%%%%%%%%%%%%%%%%%%%%%%%%%%%%%%%%%%%%%%%%%%%%%%%%%%%%%%%%%%%%%%%%%%%%%%%%%%%%%%%%%%

\section{First-order approximations of the solution map}   \label{Sect:3}

When dealing with cone approximation of the solution map $\Solv$, defined
as in (\ref{eq:Solvpsetchar}), a first question to be faced
is how to represent tangential approximations of each value of $\Solv$
in terms of problem data. This question can be readily addressed by standard tools
of convex analysis. Indeed, by exploiting well-known calculus rules for
the contigent cone with respect to set operations (in particular, intersection) and
to taking the counterimage of a set, under a standard qualification condition for smooth
single-valued maps (known as Robinson condition, see \cite[Section 11.4]{Schi07}),
it is possible to establish immediately the following result.

\begin{proposition}[Tangential approximation of images]   \label{pro:appval}
Given $(\bar p,\bar x)\in\graph\Solv$, if
\begin{equation}   \label{in:Robiqual}
    \nullv\in\inte[A(\bar p,\cdot)(C(\bar p))-Q(\bar p)],
\end{equation}
then it holds
\begin{equation}    \label{eq:Solvimapprox}
  \Tang{\Solv(\bar p)}{\bar x}=\Tang{C(\bar p)}{\bar x}\cap
  \bigl[A(\bar p,\cdot)^{-1}\left(\Tang{Q(\bar p)}{A(\bar p,\bar x)}\right)\bigl].
\end{equation}
\end{proposition}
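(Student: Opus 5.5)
We reduce the statement to a single application of the counterimage rule (\ref{eq:A-1Tang}) by lifting the problem to the product space. Write $A_0=A(\bar p,\cdot)\in\Lin(\X,\Y)$, $C_0=C(\bar p)$, $Q_0=Q(\bar p)$ and $\bar y=A_0\bar x$. Define $\mathcal{B}\in\Lin(\X,\X\times\Y)$ by $\mathcal{B}(x)=(x,A_0x)$, and let $\Omega=C_0\times Q_0$, a closed convex subset of $\X\times\Y$ by $(a_1)$. Then
$$
\Solv(\bar p)=C_0\cap A_0^{-1}(Q_0)=\mathcal{B}^{-1}(\Omega),
\qquad \mathcal{B}(\bar x)=(\bar x,\bar y)\in\Omega .
$$
Once the qualification condition $\nullv\in\inte[\mathcal{B}(\X)-\Omega]$ is checked, (\ref{eq:A-1Tang}) gives
$$
\Tang{\Solv(\bar p)}{\bar x}=\mathcal{B}^{-1}\bigl(\Tang{C_0\times Q_0}{(\bar x,\bar y)}\bigr).
$$

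Next we use the fact that for convex sets the contingent cone of a product is the product of the contingent cones: $\Tang{C_0\times Q_0}{(\bar x,\bar y)}=\Tang{C_0}{\bar x}\times\Tang{Q_0}{\bar y}$. This holds because, for a convex set, $\Tang{\cdot}{\cdot}$ is the closure of the cone generated by the set minus the point. The inclusion ``$\subseteq$'' is immediate from the definition. For ``$\supseteq$'', approximate $(v,w)$ by elements $(s(c-\bar x),r(q-\bar y))$ and use convexity to equalize the scalars: take the scalar $\max\{s,r\}$ and shrink the other increment toward the base point. Hence $\mathcal{B}^{-1}$ of this product is exactly the set of $v$ with $v\in\Tang{C_0}{\bar x}$ and $A_0v\in\Tang{Q_0}{\bar y}$, which is (\ref{eq:Solvimapprox}).

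The main obstacle is deriving $\nullv\in\inte[\mathcal{B}(\X)-C_0\times Q_0]$ from (\ref{in:Robiqual}). Pick $r>0$ with $r\Uball\subseteq A_0(C_0)-Q_0$ and take $(a,b)\in\X\times\Y$ with norms small relative to $r$. We seek $x\in\X$, $c\in C_0$ and $q\in Q_0$ such that $x-c=a$ and $A_0x-q=b$. Substituting $x=a+c$, this reduces to $A_0c-q=b-A_0a$. Since $\|b-A_0a\|\le\|b\|+\|A_0\|_{\Lin}\|a\|<r$ for small $(a,b)$, the hypothesis provides such $c$ and $q$. Therefore a neighbourhood of the origin lies in $\mathcal{B}(\X)-\Omega$, and the proof is complete.
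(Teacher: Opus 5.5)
Your proof is correct, but it takes a different route from the paper's. The paper notes that, because $A(\bar p,\cdot)$ is linear, Robinson's condition for the system $x\in C(\bar p)$, $A(\bar p,x)\in Q(\bar p)$ reduces to (\ref{in:Robiqual}). It then cites, as a black box, the general contingent-cone rule for sets of the form $C\cap g^{-1}(Q)$ under Robinson's condition (Theorem 11.4.4 of Schirotzek's monograph).

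You instead stay within the tools recalled in Section~\ref{Sect:2.2}. The diagonal embedding $\mathcal{B}x=(x,A_0x)$ turns the intersection into a single counterimage $\mathcal{B}^{-1}(C_0\times Q_0)$, so only the linear rule (\ref{eq:A-1Tang}) is needed. You add the elementary fact that contingent cones of convex sets factor over products. Your rescaling with $\max\{s,r\}$ is fine, and it is exactly where convexity from $(a_1)$ is used: for nonconvex sets only ``$\subseteq$'' survives.

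Your derivation of $\nullv\in\inte[\mathcal{B}(\X)-C_0\times Q_0]$ from (\ref{in:Robiqual}) is also correct. The two conditions are in fact equivalent (take $a=\nullv$ for the converse), which shows why (\ref{in:Robiqual}) is precisely the right hypothesis.

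What your approach buys is a self-contained argument, in the same spirit as the lifting $\mathcal{A}(p,x)=(p,Ax)$ used in Lemma~\ref{lem:A-1Q}. What the paper's citation buys is brevity and an immediate extension to smooth nonlinear dependence on $x$. Your reduction would not cover that case unless (\ref{eq:A-1Tang}) were replaced by its nonlinear Robinson-type counterpart.
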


\begin{proof}
It suffices to remember that, according to (\ref{eq:Solvpsetchar}),
$\Solv(\bar p)= C(\bar p)\cap A(\bar p,\cdot)^{-1}(Q(\bar p))$ and
to observe that, as $A(\bar p,\cdot)$ is linear and bounded according to the assumption
$(a_2)$, then the Robinson condition becomes
\begin{eqnarray*}
  \nullv &\in &\inte[A(\bar p,\bar x)+\nabla A(\bar p,\cdot)(C(\bar p))-\bar x)-Q(\bar p)]   \\
  &= &\inte[A(\bar p,\cdot)(C(\bar p))-Q(\bar p)].
\end{eqnarray*}
Then the representation in (\ref{eq:Solvimapprox}) follows at once by virtue of
well-known calculus rules for the contingent cone (see, for instance,
\cite[Theorem 11.4.4]{Schi07}), under the proper qualification condition.
\smartqed\qed
\end{proof}

\begin{remark}
Under the standing assumption $(a_1)$ and $(a_2)$, it is clear that
$A(\bar p,\cdot)(C(\bar p))$ and $Q(\bar p)$ are convex subsets of $\Y$, while $Q(\bar p)$ is
closed. Whenever $A(\bar p,\cdot)(C(\bar p))$ is also closed,
the Robinson condition in (\ref{in:Robiqual}) is known
to be sufficient for the pair $A(\bar p,\cdot)(C(\bar p))$ and $Q(\bar p)$
to be subtransversal at the common point $A(\bar p,\bar x)$. In other words, the hypothesis
of Proposition \ref{pro:appval} can be geometrically viewed as a condition about
a proper mutual arrangement of the two above sets near $A(\bar p,\bar x)$. Since
such an arrangement can be reformulated in purely metric terms as the existence
of positive $\kappa$ and $\delta$ such that
\begin{eqnarray*}
 \dist{y}{A(\bar p,\cdot)(C(\bar p))\cap Q(\bar p)}
 &\le& \kappa \max\left\{\dist{y}{A(\bar p,\cdot)(C(\bar p))},\
 \dist{y}{Q(\bar p)}\right\},  \\
 & & \forall y\in\cball{A(\bar p,\bar x)}{\delta},
\end{eqnarray*}
then the hypothesis in Proposition \ref{pro:appval} requires
roughly speaking that ``if you are close to both the
sets of the pair, then the intersection cannot be too far away"
to quote \cite{BauBor96}, where more discussions about subtransversality
of pair of sets and their role in the convergence analysis of iterative methods for solving
feasibility problems can be found. For connections of this property with
error bounds, weak sharp minimality, linear subopenness/metric subregularity,
the reader is referred to \cite{KrLuTh17}.
\end{remark}

Of course, representing the tangential approximation of the values of the
multifunction $\Solv$ one-by-one does not amount to provide a first-order
approximation of the set-valued map $\Solv$ itself, as resulting from its graphical differentiation.
This because the graphical derivative of the multifunction $\Solv$ is affected by the behaviour
of $\Solv$ as both $p$ and $x$ vary near the reference pair $(\bar p,\bar x)$,
whereas the tangential approximation of the specific value $\Solv(\bar p)$ takes
into account only the local geometry of the set $\Solv(\bar p)$ in the
space $\X$, while $p$ remaining fixed.

Nevertheless, in  the particular case, in which $C$ and $Q$ are closed and convex
multifunctions, and $A$ does not depend on $p$, i.e. $A\in\Lin(\X,\Y)$,
and is onto, following an approach similar to that of Proposition \ref{pro:appval}
leads to an exact representation of $\Gder{\Solv}{\bar p,\bar x}$.

\begin{theorem}[Exact representation of ${\rm D}_{\rm T}\Solv$]    \label{thm:convspecase}
With reference to a family $(\SFP_p)$, let $(\bar p,\bar x)\in\graph\Solv$.
Suppose that:
\begin{itemize}
  \item[(i)] $C$ and $Q$ are closed convex multifunctions;

  \item[(ii)] $A\in\Lin(\X,\Y)$ is onto;

  \item[(iii)] $\nullv\in\inte[\graph C-\graph(A^{-1}\circ Q)]$.
\end{itemize}
Then, $\Solv:\Pb\rightrightarrows\X$ is a closed and convex multifunction
and it holds
\begin{equation}   \label{GderSolvexact}
  \Gder{\Solv}{\bar p,\bar x}(u)=\Gder{C}{\bar p,\bar x}(u) \cap
  A^{-1}\left( \Gder{Q}{\bar p,A(\bar x))}(u)\right),
  \quad\forall u\in\Pb.
\end{equation}
\end{theorem}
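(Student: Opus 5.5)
The plan is to write the graph of $\Solv$ as the intersection of two closed convex subsets of $\Pb\times\X$, and then apply the calculus rule for the contingent cone of an intersection. When $A$ does not depend on $p$, $x\in\Solv(p)$ means $x\in C(p)$ and $A(x)\in Q(p)$. Hence
\[
\graph\Solv=\graph C\cap\graph(A^{-1}\circ Q).
\]
By hypothesis (i), $\graph C$ is closed and convex. Since $A$ is onto, Lemma \ref{lem:A-1Q} shows that $A^{-1}\circ Q$ is a closed convex multifunction. Therefore $\graph\Solv$ is closed and convex, which gives the first assertion.

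For the formula, I would invoke the standard intersection rule for tangent cones to closed convex sets, as in the reference used for Proposition \ref{pro:appval} (\cite[Theorem 11.4.4]{Schi07}). That rule is the special case of the counterimage rule (\ref{eq:A-1Tang}) with the diagonal operator $\X\ni z\mapsto(z,z)$. Concretely, let $\Omega_1,\Omega_2$ be closed convex subsets of a Banach space $Z$ with $\nullv\in\inte(\Omega_1-\Omega_2)$. Set $\Delta z=(z,z)$, so that $\Omega_1\cap\Omega_2=\Delta^{-1}(\Omega_1\times\Omega_2)$. The qualification in (\ref{eq:A-1Tang}) reads $\nullv\in\inte[\Delta(Z)-\Omega_1\times\Omega_2]$, and since $(a,b)\in\Delta(Z)-\Omega_1\times\Omega_2$ iff $b-a\in\Omega_1-\Omega_2$ (up to sign), it is equivalent to $\nullv\in\inte(\Omega_1-\Omega_2)$. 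Then (\ref{eq:A-1Tang}) gives
\[
\Tang{\Omega_1\cap\Omega_2}{z_0}=\Tang{\Omega_1}{z_0}\cap\Tang{\Omega_2}{z_0}.
\]
We apply this with $Z=\Pb\times\X$, $\Omega_1=\graph C$, $\Omega_2=\graph(A^{-1}\circ Q)$ and $z_0=(\bar p,\bar x)$. The required qualification is exactly hypothesis (iii), so
\[
\Tang{\graph\Solv}{(\bar p,\bar x)}=\Tang{\graph C}{(\bar p,\bar x)}\cap\Tang{\graph(A^{-1}\circ Q)}{(\bar p,\bar x)}.
\]

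Finally, each cone is rewritten as the graph of the corresponding derivative. The first factor is $\graph\Gder{C}{\bar p,\bar x}$. For the second, Lemma \ref{lem:A-1Q} applies with $y_0=A(\bar x)$, which lies in $Q(\bar p)$ because $\bar x\in\Solv(\bar p)$. It identifies the cone with $\graph\bigl(A^{-1}\circ\Gder{Q}{\bar p,A(\bar x)}\bigr)$. Reading the intersection of graphs fiberwise at each $u\in\Pb$ yields (\ref{GderSolvexact}).

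The only delicate step is checking that the qualification hypothesis of the counterimage rule holds in the diagonal reformulation, i.e.\ the interior-point equivalence above. It uses the closedness and convexity of both graphs and the fact that (\ref{eq:A-1Tang}) is stated for closed convex sets. This is routine, and everything else follows from earlier results.
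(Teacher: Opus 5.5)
Your proposal is correct and follows the paper's proof essentially step by step. You write $\graph\Solv=\graph C\cap\graph(A^{-1}\circ Q)$, obtain closedness and convexity from Lemma~\ref{lem:A-1Q}, apply the intersection rule for contingent cones of closed convex sets under (iii), and conclude with the chain rule of Lemma~\ref{lem:A-1Q}.

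The only addition is your diagonal-map derivation of the intersection rule from (\ref{eq:A-1Tang}), which the paper simply cites. It is valid, though it tacitly uses $\Tang{\Omega_1\times\Omega_2}{(z_0,z_0)}=\Tang{\Omega_1}{z_0}\times\Tang{\Omega_2}{z_0}$, which holds for convex sets. Also, the interior-point equivalence is purely linear, since $(a,b)\mapsto b-a$ is an open continuous surjection, so it needs no closedness or convexity.
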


\begin{proof}
By hypothesis (ii), $\Solv$ can be expressed through its graph as being
\begin{equation}\label{eq:graphSolvCAQ}
  \graph\Solv=\graph C\cap \graph (A^{-1}\circ Q).
\end{equation}
According to Lemma \ref{lem:A-1Q}, by the hypotheses (i) and (ii)
$\graph(A^{-1}\circ Q)$ is closed and convex, as well as $\graph C$.
Thus $\graph\Solv$ is closed and convex. From the equality in (\ref{eq:graphSolvCAQ})
it follows
$$
  \Tang{\graph\Solv}{(\bar p,\bar x)}=\Tang{\graph C\cap \graph (A^{-1}\circ Q)}{(\bar p,\bar x)}.
$$
By virtue of the well-known calculus rule for the Bouligand tangent cone
of intersection of closed convex sets, which is valid under the qualification
in hypothesis (iii), one obtains
\begin{eqnarray*}
  \graph \Gder{\Solv}{\bar p,\bar x} &=& \Tang{\graph\Solv}{(\bar p,\bar x)}=\Tang{\graph C}{(\bar p,\bar x)}\cap
  \Tang{\graph (A^{-1}\circ Q)}{(\bar p,\bar x)} \\
   &=& \graph\Gder{C}{\bar p,\bar x}\cap \graph \Gder{(A^{-1}\circ Q)}{\bar p,\bar x}.
\end{eqnarray*}
By applying the chain rule for the contingent derivative of composition
of multifunctions provided in Lemma \ref{lem:A-1Q}, one finds
\begin{equation}    \label{eq:protodiffSolv}
  \graph \Gder{\Solv}{\bar p,\bar x}=\graph\Gder{C}{\bar p,\bar x}\cap
  \graph\left(A^{-1}\circ \Gder{Q}{\bar p,A(\bar x)}\right),
\end{equation}
which entails the equality in the thesis.
\smartqed\qed
\end{proof}

\begin{remark}({\it Proto-differentiability of $\Solv$})
It is worth pointing out that, in the special setting of Theorem \ref{thm:convspecase},
$\Solv$ gains an enhanced differential property called proto-differentiability.
Indeed, from the equality in (\ref{eq:protodiffSolv}) it is clear that $\Gder{\Solv}{\bar p,\bar x}$
is a closed sublinear multifunction (a.k.a. closed convex process).
Since it is closed, $\Gder{\Solv}{\bar p,\bar x}$ is outer semicontinuous 
at each $u\in\Pb$ (see \cite[Theorem 3B.2(c)]{DonRoc14}). According to \cite[Example 8.39]{RocWet98},
the combination of convexity and outer semicontinuity yields the property of
graph regularity  (see \cite[Definition 8.38]{RocWet98}), which in turn is known to be a sufficient condition for the
proto-differentiability (see \cite[Proposition 8.41]{RocWet98}).
The latter property means that for each $u\in\Pb$ and $v\in\Gder{\Solv}{\bar p,\bar x}(u)$,
for any choice of $(t_n)_{n\in\N}$ in $(0,+\infty)$, with $t_n\downarrow 0$ as $n\to\infty$,
there must exist sequences $(u_n)_{n\in\N}$ in $\Pb$ and $(v_n)_{n\in\N}$ in $\X$, such that
$\bar x+t_nv_n\in\Solv(\bar p+t_nu_n)$ for every $n\in\N$ and $(u_n,v_n)\to (\bar p,\bar x)$,
as $n\to\infty$. In terms of set-convergence of quotients, the $\limsup$ defining the notion
of contingent cone becomes in this case a full limit.
For more details about this property and the related notion of graph regularity, the reader
is referred to \cite[Section 8.H${}^*$]{RocWet98}.

\end{remark}

Next, when passing to consider the more general case of an arbitrary class $(\SFP_p)$,
one can hardly expect to obtain an exact representation as in Theorem
\ref{thm:convspecase}. Under rather general assumptions, a first result
can be established, which provides an approximated estimate of ${\rm D}_{\rm T}\Solv$
from outside.

\begin{theorem}[Outer approximation of ${\rm D}_{\rm T}\Solv$]    \label{thm:outapproxDSolv}
With reference to a family $(\SFP_p)$, let $(\bar p,\bar x)\in\graph\Solv$.
If $A$ is Fr\'echet differentiable at $(\bar p,\bar x)$, then the following
inclusion holds
\begin{equation}   \label{GderSolvout}
  \Gder{\Solv}{\bar p,\bar x}(u)\subseteq \Gder{C}{\bar p,\bar x}(u) \cap
  \bigl[\nabla A(\bar p,\bar x)(u,\cdot)^{-1}\left(\Gder{Q}{\bar p,A(\bar p,\bar x)}(u)\right)\bigl],
  \quad\forall u\in\Pb.
\end{equation}
\end{theorem}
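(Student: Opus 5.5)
Fix $u\in\Pb$ and $v\in\Gder{\Solv}{\bar p,\bar x}(u)$. By the definition of contingent derivative, $(u,v)\in\Tang{\graph\Solv}{(\bar p,\bar x)}$. Hence there are sequences $t_n\downarrow 0$ and $(u_n,v_n)\to(u,v)$ such that
$$
  \bar x+t_nv_n\in\Solv(\bar p+t_nu_n),\quad\forall n\in\N .
$$
By (\ref{eq:Solvpsetchar}) this membership splits into two conditions:
$$
  \bar x+t_nv_n\in C(\bar p+t_nu_n)
  \quad\hbox{ and }\quad
  A(\bar p+t_nu_n,\bar x+t_nv_n)\in Q(\bar p+t_nu_n).
$$
The two conditions will be treated separately, and each will give one of the two sets in the intersection in (\ref{GderSolvout}).

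The first condition says that $(\bar p,\bar x)+t_n(u_n,v_n)\in\graph C$ with $(u_n,v_n)\to(u,v)$ and $t_n\downarrow0$. This is exactly the sequential form of $(u,v)\in\Tang{\graph C}{(\bar p,\bar x)}$, so $v\in\Gder{C}{\bar p,\bar x}(u)$. No hypothesis on $C$ beyond $(a_1)$ is used in this step.

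For the second condition, set $y_0=A(\bar p,\bar x)$. By Fr\'echet differentiability of $A$ at $(\bar p,\bar x)$,
$$
  A(\bar p+t_nu_n,\bar x+t_nv_n)=y_0+t_n\nabla A(\bar p,\bar x)(u_n,v_n)+o(t_n\|(u_n,v_n)\|).
$$
Since $(u_n,v_n)$ is bounded, the remainder term is $o(t_n)$. Define
$$
  w_n=\nabla A(\bar p,\bar x)(u_n,v_n)+\frac{o(t_n)}{t_n}.
$$
Then $w_n\to w:=\nabla A(\bar p,\bar x)(u,v)$, because the derivative is a bounded linear operator on $\Pb\times\X$. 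Moreover $(\bar p,y_0)+t_n(u_n,w_n)\in\graph Q$ for all $n$. This gives $(u,w)\in\Tang{\graph Q}{(\bar p,y_0)}$, that is, $\nabla A(\bar p,\bar x)(u,v)\in\Gder{Q}{\bar p,A(\bar p,\bar x)}(u)$. Reading $\nabla A(\bar p,\bar x)(u,\cdot)$ as the affine map $v\mapsto\nabla A(\bar p,\bar x)(u,v)$, this says precisely that $v$ lies in the preimage set in (\ref{GderSolvout}). Intersecting with the first conclusion proves the inclusion for every $u\in\Pb$.

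No step is a serious obstacle. The only point needing care is the passage from the general point $(u_n,v_n)$ to the limit: the Fr\'echet remainder must be divided by $t_n$ rather than by $\|t_n(u_n,v_n)\|$. This works because $\|(u_n,v_n)\|$ stays bounded; in the case $(u,v)=(\nullv,\nullv)$ the remainder is still $o(t_n)$, so nothing changes. I would also record explicitly the sequential characterization of $\Tang{\cdot}{\cdot}$, which is equivalent to the $\delta$-definition of the contingent cone given in Section~\ref{Sect:2.2}. With that characterization in hand, both memberships follow directly from the definitions, and the linearity in $v$ of $\nabla A(\bar p,\bar x)(u,\cdot)$ (up to the translation by $\nabla A(\bar p,\bar x)(u,\nullv)$) makes the preimage notation meaningful.
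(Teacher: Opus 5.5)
Your proof is correct and follows essentially the same route as the paper: the sequential description of $\Tang{\graph\Solv}{(\bar p,\bar x)}$, then splitting the membership into the $C$-condition and the $Q$-condition, then a Fr\'echet expansion of $A$ with the remainder divided by $t_n$, which gives $(u,\nabla A(\bar p,\bar x)(u,v))\in\Tang{\graph Q}{(\bar p,A(\bar p,\bar x))}$. You also state explicitly why the scaled remainder vanishes, namely the boundedness of $(u_n,v_n)$; the paper passes over this point silently, but the argument is the same.
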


\begin{proof}
Fix an arbitrary $u\in\Pb$. If $\Gder{\Solv}{\bar p,\bar x}(u)=\varnothing$, then
the inclusion in (\ref{GderSolvout}) becomes obvious. Otherwise, suppose that $v\in
\Gder{\Solv}{\bar p,\bar x}(u)$, which means that $(u,v)\in\Tang{\graph\Solv}{(\bar p,\bar x)}$.
Then, there exist $(u_n,v_n)_{n\in\N}$ in $\Pb\times\X$, with $(u_n,v_n)\to (u,v)$ as $n\to\infty$,
and $(t_n)_{n\in\N}$ in $(0,+\infty)$, with $t_n\downarrow 0$ as $n\to\infty$, such that
$$
  (\bar p,\bar x)+t_n(u_n,v_n)=(\bar p+t_nu_n,\bar x+t_nv_n)\in\graph\Solv,
  \quad\forall n\in\N,
$$
or, equivalently,
$$
  \bar x+t_nv_n\in\Solv(\bar p+t_nu_n)=C(\bar p+t_nu_n)\cap
  A(\bar p+t_nu_n,\bar x+t_nv_n)^{-1}(Q(\bar p+t_nu_n)),
  \quad\forall n\in\N.
$$
In particular, this inclusion implies that $\bar x+t_nv_n\in C(\bar p+t_nu_n)$,
for every $n\in\N$. Since $v_n\to v$ as $n\to\infty$, this shows that
\begin{equation} \label{in:vinDC1}
   v\in\Gder{C}{\bar p,\bar x}(u).
\end{equation}

Moreover, the same inclusion entails that $A(\bar p+t_nu_n,\bar x+t_nv_n)\in
Q(\bar p+t_nu_n)$, for every $n\in\N$. By the differentiability of $A$ at $(\bar p,\bar x)$,
it follows
$$
   A(\bar p,\bar x)+t_n\nabla A(\bar p,\bar x)(u_n,v_n)+o(\|t_n(u_n,v_n)\|)
   \in Q(\bar p+t_nu_n),\quad\forall n\in\N,
$$
where $o(\|w\|)$ means
$$
   \lim_{w\to\nullv}{\|o(w)\|\over \|w\|}=0.
$$
Therefore, by setting $y_n=A(\bar p,\bar x)(u_n,v_n)+t_n^{-1}o(\|t_n(u_n,v_n)\|)$,
one has
$$
  A(\bar p,\bar x)+t_ny_n\in  Q(\bar p+t_nu_n),\quad\forall n\in\N,
$$
with $y_n\to \nabla A(\bar p,\bar x)(u,v)$, as $n\to\infty$.
From this fact it possible to deduce the inclusion
$(u,\nabla A(\bar p,\bar x)(u,v))\in\Tang{\graph Q}{(\bar p,A(\bar p,\bar x))}=\graph
\Gder{Q}{\bar p,A(\bar p,\bar x)}$, and consequently $\nabla A(\bar p,\bar x)(u,v)\in
\Gder{Q}{\bar p,A(\bar p,\bar x)}(u)$. This means
\begin{equation} \label{in:vinA-1DQ2}
   v\in \nabla A(\bar p,\bar x)(u,\cdot)^{-1}\left(\Gder{Q}{\bar p,A(\bar p,\bar x)}(u)\right).
\end{equation}
The validity of both the inclusions (\ref{in:vinDC1}) and (\ref{in:vinA-1DQ2})
allows one to obtain the inclusion in the thesis.
\smartqed\qed
\end{proof}

\begin{remark}
The reader should notice that, whereas the positive homogeneity
of the map $\nabla A(\bar p,\bar x)$ is actually exploited, the additivity property
plays no role in the proof of Theorem \ref{thm:outapproxDSolv}. Therefore,
the resulting outer approximation can be readily generalized by replacing the
Fr\'echet differentiability assumption on $A$ with proper differentiability
notions from nonsmooth analysis. For example, following \cite{Robi91},
a map $F:\X\longrightarrow\Y$ between normed vector spaces is said to
be $B$-differentiable at $x_0$ if there is a continuous positively
homogeneous map $\Bder{F}{x_0}:\X\longrightarrow\Y$ such that
$$
  F(x_0+u)=F(x_0)+\Bder{F}{x_0}(u)+o(\|u\|).
$$
The function $\Bder{F}{x_0}$ is necessarily unique, whenever it exists, and
$\Bder{F}{x_0}=\nabla F(x_0)$, provided that $F$ is Fr\'echet differentiable at $x_0$.
If $A$ is $B$-differentiable at $(\bar p,\bar x)$, by trivial adjustments
in the proof of Theorem \ref{thm:outapproxDSolv}  it is possible
to prove the validity of the following inclusion
$$
  \Gder{\Solv}{\bar p,\bar x}(u)\subseteq \Gder{C}{\bar p,\bar x}(u) \cap
  \bigl[\Bder{F}{\bar p,\bar x}(u,\cdot)^{-1}\left(\Gder{Q}{\bar p,A(\bar p,\bar x)}(u)\right)\bigl],
  \quad\forall u\in\Pb,
$$
generalizing (\ref{GderSolvout}).
\end{remark}

The next example aims at illustrating that the inclusion in (\ref{GderSolvout})
may happen to be strict.

\begin{example}({\it Exactness failure})
Let $\Pb=\X=\Y=\R$ be endowed with their standard Euclidean space structure.
Consider the family $(\SFP_p)$ defined by
$$
  C(p)=(-\infty,-p^2],\qquad Q(p)=[p^2,+\infty), \qquad A(p,x)=px,
$$
and fix $\bar p=\bar x=0$, so $A(0,0)=0$. As
\begin{eqnarray*}
  px\ge p^2 \qquad\hbox{ iff } \qquad
  \left\{\begin{array}{ll}
  x\ge p, & \qquad\hbox{\ if } p>0,
   \\
   x\in\R, & \qquad\hbox{\ if } p=0,
   \\
   x\le p, & \qquad\hbox{\ if } p<0,
  \end{array}
  \right.
\end{eqnarray*}
then the solution map $\Solv:\R\rightrightarrows\R$ associated with
the present $(\SFP_p)$ turns out to be
\begin{eqnarray*}
  \Solv(p)=\left\{\begin{array}{ll}
      \varnothing , & \qquad\hbox{\ if } p>0,
   \\
      (-\infty,\min\{-p^2,\, p \}], & \qquad\hbox{\ if } p\le 0. \\
  \end{array}
  \right.
\end{eqnarray*}
Thus, it is clear that $(0,0)\in\graph\Solv$ and $\Tang{\Solv}{(0,0)}=
\{(p,x)\in\R\times\R:\ x\le p\le 0\}$. Consequently, one finds
\begin{eqnarray*}
  \Gder{\Solv}{0,0}(u)=\left\{\begin{array}{ll}
      \varnothing , & \qquad\hbox{\ if } u>0,
   \\
      (-\infty,u], & \qquad\hbox{\ if } u\le 0. \\
  \end{array}
  \right.
\end{eqnarray*}
Since, as one readily sees,
$$
  \Tang{\graph C}{(0,0)}=\R\times (-\infty,0],\qquad
  \Tang{\graph Q}{(0,0)}=\R\times (-\infty,0],
$$
and
$$
  \nabla A(0,0)=\langle (0,0),(u,v)\rangle=0,\quad\forall(u,v)\in\R\times\R,
$$
one has
$$
  \Gder{C}{0,0}(u)=(-\infty,0],\qquad
  \Gder{Q}{0,A(0,0)}(u)=[0,+\infty),\quad\forall u\in\R,
$$
and hence
$$
  \nabla A(0,0)(u,\cdot)^{-1}\left(\Gder{Q}{0,A(0,0)}(u)\right)=
  \R,\quad\forall u\in\R.
$$
Therefore, in the present case, for every $u>0$ one obtains
\begin{eqnarray*}
 \Gder{\Solv}{0,0}(u) &=& \varnothing
   \subsetneqq (-\infty,0]    \\
 &= &\Gder{C}{0,0}(u)\cap \bigl[\nabla A(0,0)(u,\cdot)^{-1}\left(\Gder{Q}{0,A(0,0)}(u)\right)\bigl].
\end{eqnarray*}
\end{example}

Providing only a one-side estimate of ${\rm D}_{\rm T}\Solv$, Theorem \ref{thm:outapproxDSolv}
needs to be complemented with a result about approximations from inside.
This will be done here by an approach relying on error bounds, as seen in Section
\ref{Sect:2.4}, which requires a more articulated set of assumptions.

\begin{theorem}[Inner approximation of ${\rm D}_{\rm T}\Solv$]  \label{thm:inapproxDSolv}
With reference to a family $(\SFP_p)$, let $(\bar p,\bar x)\in\graph\Solv$.
Suppose that:
\begin{itemize}
  \item[(i)] $C:\Pb\rightrightarrows\X$ is l.s.c. at $(\bar p,\bar x)$;
  \item[(ii)] $Q:\Pb\rightrightarrows\Y$ is l.s.c. at $(\bar p,A(\bar p,\bar x))$;
  \item[(iii)] $A$ is Fr\'echet differentiable at $(\bar p,\bar x)$;
  \item[(iv)] $(\DRC)$ holds near $(\bar p,\bar x)$.
\end{itemize}
Then, the following inclusions hold for every $u\in\Pb$
\begin{equation}   \label{GderSolvia1}
  \Gder{\Solv}{\bar p,\bar x}(u)\supseteq \Gder{C}{\bar p,\bar x}(u) \cap
  \bigl[\nabla A(\bar p,\bar x)(u,\cdot)^{-1}\left( \Ider{Q}{\bar p,A(\bar p,\bar x)}(u)\right)\bigl];
\end{equation}
\begin{equation}   \label{GderSolvia2}
  \Gder{\Solv}{\bar p,\bar x}(u)\supseteq \Ider{C}{\bar p,\bar x}(u) \cap
  \bigl[\nabla A(\bar p,\bar x)(u,\cdot)^{-1}\left( \Gder{Q}{\bar p,A(\bar p,\bar x)}(u)\right)\bigl].
\end{equation}
\end{theorem}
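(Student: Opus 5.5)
The plan is to show that the error bound of Theorem \ref{thm:Solverbo} turns ``first-order approximate feasibility'' into genuine tangency for the graph of $\Solv$. Hypotheses (i), (ii), (iv) of Theorem \ref{thm:inapproxDSolv} are exactly those of Theorem \ref{thm:Solverbo}, except for (iii) there, which follows from the Fr\'echet differentiability of $A$ at $(\bar p,\bar x)$, since this entails continuity of $A(\cdot,\bar x)$ at $\bar p$. So there are $\eta,\zeta>0$ such that
$$
\dist{x}{\Solv(p)}\le\tau(\delta)^{-1}\mf(p,x),\quad \forall (p,x)\in\cball{\bar p}{\zeta}\times\cball{\bar x}{\eta}.
$$
By Lemma \ref{lem:grphderdist}(i), in order to get $v\in\Gder{\Solv}{\bar p,\bar x}(u)$ it suffices to find $t_n\downarrow 0$ and $u_n\to u$ with $\dist{\bar x+t_nv}{\Solv(\bar p+t_nu_n)}/t_n\to 0$. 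For $n$ large, the point $(\bar p+t_nu_n,\bar x+t_nv)$ lies in the box above, so it is enough to show
$$
\frac{\dist{\bar x+t_nv}{C(\bar p+t_nu_n)}}{t_n}+\frac{\dist{A(\bar p+t_nu_n,\bar x+t_nv)}{Q(\bar p+t_nu_n)}}{t_n}\longrightarrow 0 .
$$

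For (\ref{GderSolvia1}), take $v$ in the right-hand side and set $w=\nabla A(\bar p,\bar x)(u,v)\in\Ider{Q}{\bar p,A(\bar p,\bar x)}(u)$. Since $v\in\Gder{C}{\bar p,\bar x}(u)$, Lemma \ref{lem:grphderdist}(i) provides sequences $t_n\downarrow0$ and $u_n\to u$ along which the $C$-term above tends to $0$. Fix these sequences. For the $Q$-term, Fr\'echet differentiability gives
$$
A(\bar p+t_nu_n,\bar x+t_nv)=A(\bar p,\bar x)+t_n\nabla A(\bar p,\bar x)(u_n,v)+o(t_n)=A(\bar p,\bar x)+t_nw+t_n\varepsilon_n,
$$
with $\varepsilon_n\to\nullv$, using boundedness of $(u_n)$ and linearity of $\nabla A(\bar p,\bar x)$ in $u$. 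The distance function is $1$-Lipschitz, so the $Q$-term is at most $\|\varepsilon_n\|$ plus $\dist{A(\bar p,\bar x)+t_nw}{Q(\bar p+t_nu_n)}/t_n$. The latter tends to $0$ by Lemma \ref{lem:grphderdist}(ii), whose limit is a full limit over all $u'\to u$ and $t\downarrow0$, hence holds along our particular $(u_n,t_n)$.

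For (\ref{GderSolvia2}) the roles are swapped. Since $\nabla A(\bar p,\bar x)(u,v)\in\Gder{Q}{\bar p,A(\bar p,\bar x)}(u)$, Lemma \ref{lem:grphderdist}(i) applied to $Q$ picks the sequences $t_n,u_n$, and the same Taylor expansion controls the $Q$-term. Then $v\in\Ider{C}{\bar p,\bar x}(u)$ together with Lemma \ref{lem:grphderdist}(ii) kills the $C$-term along those same sequences.

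The main point to handle carefully is this asymmetry. Lemma \ref{lem:grphderdist}(i) only yields some sequence realizing the $\liminf$, and two contingent conditions would in general produce incompatible sequences. This is why one of the two data must be in the interior-displacement derivative, whose full limit is compatible with any chosen sequence. I would also check that the Taylor remainder is uniform: the quantity $o(\|t_n(u_n,v)\|)/t_n\to0$ requires $(u_n)$ to be bounded, which holds since $u_n\to u$. Finally, the constant $\tau(\delta)^{-1}$ is fixed, so multiplying by it preserves convergence to zero.
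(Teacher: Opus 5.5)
Your proposal is correct and follows essentially the same route as the paper's proof. Both arguments use the error bound of Theorem \ref{thm:Solverbo}, the Fr\'echet expansion of $A$ with the correction term $\nabla A(\bar p,\bar x)(u_n-u,\nullv)$, Lemma \ref{lem:grphderdist}(i) for the contingent datum and Lemma \ref{lem:grphderdist}(ii) for the interior-displacement datum; you fix a sequence $(t_n,u_n)$ realizing the $\liminf$ and evaluate the full limit along it, which is a cleaner sequential version of the paper's step ``$\liminf$ of a sum $\le$ $\lim$ of one term $+$ $\liminf$ of the other''.
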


\begin{proof}
Proof of inclusion (\ref{GderSolvia1}).
Fix any $u\in\Pb$ and take an arbitrary element
$$
  v\in\Gder{C}{\bar p,\bar x}(u) \cap \bigl[
  \nabla A(\bar p,\bar x)(u,\cdot)^{-1}\left( \Ider{Q}{\bar p,A(\bar p,\bar x)}(u)\right)\bigl].
$$
If the above intersection happens to be empty, then the inclusion in
(\ref{GderSolvia1}) is trivially true.
In the light of the characterization of the graphical contingent derivative
provided in Lemma \ref{lem:grphderdist}(i), it suffices to show that
\begin{equation}   \label{eq:liminfdistSolv0}
     \liminf_{u' \to u \atop t\downarrow 0} \frac{\dist{\bar x+tv}{\Solv(\bar p+tu')}}{t}=0.
\end{equation}
Under the current hypotheses it is possible to apply Theorem \ref{thm:Solverbo}.
In particular, notice that the continuity property at $\bar p$ required on $A(\cdot,\bar x)$
is ensured by the Fr\'echet differentiability of $A$ at $(\bar p,\bar x)$.
According Theorem \ref{thm:Solverbo}, there exist positive $\eta$ and $\zeta$ such that the following error bound
holds true
$$
  \dist{x}{\Solv(p)} \le {\dist{A(p,x)}{Q(p)}+\dist{x}{C(p)}\over \tau(\delta)},
   \ \forall (p,x)\in \cball{\bar p}{\zeta}\times \cball{\bar x}{\eta},
$$
where $\delta>0$ and $\tau(\delta)>0$ are as in $(\DRC)$.
This means that one can choose $\epsilon>0$ in such a way that
$$
  \bar p+tu'\in\cball{\bar p}{\zeta} \hbox{ and } \bar x+tv\in\cball{\bar x}{\eta},
  \quad\forall u'\in\cball{u}{\epsilon},\ \forall t\in (0,\epsilon],
$$
so to obtain
\begin{eqnarray}     \label{in:erboSolveps}
  \dist{\bar x+tv}{\Solv(\bar p+tu')} &\le &\tau(\delta)^{-1}
  \biggl[\dist{A(\bar p+tu',\bar x+tv)}{Q(\bar p+tu')}   \\
  & + &\dist{\bar x+tv}{C(\bar p+tu')} \biggl],
   \quad\forall (u',t)\in \cball{u}{\epsilon}\times (0,\epsilon]. \nonumber
\end{eqnarray}
Observe that by the hypothesis (iii) the following estimate holds
\begin{eqnarray*}
  & \dist{A(\bar p+tu',\bar x+tv)}{Q(\bar p+tu')} \\
   & =\dist{A(\bar p,\bar x)+t\nabla A(\bar p,\bar x)(u',v)+
  o(\|t(u',v)\|)}{Q(\bar p+tu')} \\
   & =\dist{A(\bar p,\bar x)+t\nabla A(\bar p,\bar x)(u',v)\mp t \nabla A(\bar p,\bar x)(u,v)+o(\|t(u',v)\|}{Q(\bar p+tu')}  \\
   &\le  t\|\nabla A(\bar p,\bar x)(u'-u,\nullv)\|  \\
    & +\dist{A(\bar p,\bar x)+t\nabla A(\bar p,\bar x)(u,v)}{Q(\bar p+tu')} + \|o(\|t(u',v)\|)\|, \\
   &\forall (u',t)\in \cball{u}{\epsilon}\times (0,\epsilon].
\end{eqnarray*}
Since $v\in \nabla A(\bar p,\bar x)(u,\cdot)^{-1}\left( \Ider{Q}{(\bar p,A(\bar p,\bar x))}(u)\right)$,
it is true that $\nabla A(\bar p,\bar x)(u,v)\in \Ider{Q}{(\bar p,A(\bar p,\bar x))}(u)$. Consequently,
according to what stated in  Lemma \ref{lem:grphderdist}(ii), it must be
$$
  \lim_{u' \to u \atop t\downarrow 0} \frac{\dist{A(\bar p,\bar x)+t\nabla A(\bar p,\bar x)(u,v)}{Q(\bar p+tu')}}{t}=0.
$$
From the above estimate, the annihilation of this limit leads to
\begin{eqnarray}     \label{in:limdistAQt}
  \lim_{u' \to u \atop t\downarrow 0} {\dist{A(\bar p+tu',\bar x+tv)}{Q(\bar p+tu')} \over t}
  &\le & \lim_{u' \to u \atop t\downarrow 0}
  \biggl[ \|\nabla A(\bar p,\bar x)\|_\Lin\|u'-u\| \nonumber \\
  & + & \displaystyle{\|o(\|t(u',v)\|)\|\over t}\\
  &+&  {\dist{A(\bar p,\bar x)+tA(u,v)}{Q(\bar p+tu')} \over t}  \biggl]=0. \nonumber
\end{eqnarray}
On the other hand, since $v\in\Gder{C}{\bar p,\bar x}(u)$, according to the characterization
in  Lemma \ref{lem:grphderdist}(i) it is true that
\begin{equation}  \label{eq:vinDCu}
   \liminf_{u' \to u \atop t\downarrow 0} \frac{\dist{\bar x+tv}{C(\bar p+tu')}}{t}=0.
\end{equation}
Then, by combining (\ref{in:erboSolveps}), (\ref{in:limdistAQt}), and (\ref{eq:vinDCu}), one obtains
\begin{eqnarray*}
  \liminf_{u' \to u \atop t\downarrow 0} \frac{\dist{\bar x+tv}{\Solv(\bar p+tu')}}{t} &\le &
  \frac{1}{\tau(\delta)}\biggl[\lim_{u' \to u \atop t\downarrow 0} {\dist{A(\bar p+tu',\bar x+tv)}{Q(\bar p+tu')} \over t}  \\
   &+& \liminf_{u' \to u \atop t\downarrow 0} \frac{\dist{\bar x+tv}{C(\bar p+tu')}}{t}\biggl]=0.
\end{eqnarray*}
This shows that the inequality in (\ref{eq:liminfdistSolv0}) is satisfied, thereby completing the proof
of the inclusion in (\ref{GderSolvia1}).

The proof of the inclusion in (\ref{GderSolvia2}) can be conducted by a similar technique.
In this case, properly adapting the argument described above leads to
\begin{eqnarray*}
  \liminf_{u' \to u \atop t\downarrow 0} \frac{\dist{\bar x+tv}{\Solv(\bar p+tu')}}{t} &\le &
  \frac{1}{\tau(\delta)}\biggl[ \liminf_{u' \to u \atop t\downarrow 0}{\dist{A(\bar p+tu',\bar x+tv)}{Q(\bar p+tu')} \over t}  \\
   &+& \lim_{u' \to u \atop t\downarrow 0}\frac{\dist{\bar x+tv}{C(\bar p+tu')}}{t}\biggl]=0.
\end{eqnarray*}
This completes the proof.
\smartqed\qed
\end{proof}

\begin{remark}
Recalling that $\Gder{\Solv}{\bar p,\bar x}(u)$ is always a closed set, the inclusions in
(\ref{GderSolvia1}) and $(\ref{GderSolvia2})$ can be immediately refined by writing
$$
  \Gder{\Solv}{\bar p,\bar x}(u)\supseteq \cl\biggl[\Gder{C}{\bar p,\bar x}(u) \cap
  \bigl[\nabla A(\bar p,\bar x)(u,\cdot)^{-1}\left( \Ider{Q}{\bar p,A(\bar p,\bar x)}(u)\right)\bigl]\biggl]
$$
and
$$
 \Gder{\Solv}{\bar p,\bar x}(u)\supseteq \cl\biggl[\Ider{C}{\bar p,\bar x}(u) \cap
  \bigl[\nabla A(\bar p,\bar x)(u,\cdot)^{-1}\left( \Gder{Q}{\bar p,A(\bar p,\bar x)}(u)\right)\bigl]
  \biggl].
$$
\end{remark}

As a comment to Theorem \ref{thm:inapproxDSolv}, it must be remarked that
for all those $u\in\Pb$ for which $\Ider{Q}{\bar p,A(\bar p,\bar x)}(u)=\varnothing$
(resp. $\Ider{C}{\bar p,\bar x}(u)=\varnothing$) the inclusion in $(\ref{GderSolvia1})$
(resp. in $(\ref{GderSolvia2})$) becomes uninformative. Nevertheless, conditions
are known, which ensure the nonemptiness of the above sets. For the latter set,
this occurs for example if $\graph C$ is convex with nonempty interior.

Within variational analysis, derivatives of multifunctions have been employed
in the formulation of criteria for detecting their Lipschitzian behaviour.
One of the most investigated properties of this kind is the Aubin property
(see \cite{AubFra90,DonRoc14,Ioff17,Mord06}).
Recall that a set-valued map $F:\X\rightrightarrows\Y$ between Banach spaces
is said to have the Aubin property at $(x_0,y_0)\in\graph F$ if there exist
$\ell\ge 0$ and positive $\delta$ and $\eta$ such that
\begin{equation}    \label{in:defAubin}
   F(x_1)\cap\cball{y_0}{\eta}\subseteq\cball{F(x_2)}{\ell
   \|x_1-x_2\|},\quad\forall x_1,\, x_2\in\cball{x_0}{\delta};
\end{equation}
the value
$$
  \Lip{F}{x_0}{y_0}=\inf\{\ell\ge0:\ \exists\delta,\,\eta>0
  \hbox{ for which (\ref{in:defAubin}) holds}\}
$$
is called the exact Aubin property bound of $F$ at $(x_0,y_0)$. Notice that
this property actually takes place iff $\Lip{F}{x_0}{y_0}<+\infty$.
As an application of Theorem \ref{thm:inapproxDSolv}, it is possible to
establish a sufficient condition for $\Solv$ to satisfy the Aubin property
at $(\bar p,\bar x)\in\graph\Solv$, which, bypassing the calculation of
${\rm D}_{\rm T}\Solv$, relies on the problem data. To do so, given a set-valued
map $F:\X\rightrightarrows\Y$ between Banach spaces, let us introduce the
value
$$
  \|F\|_-=\sup_{u\in\Uball}\dist{\nullv}{F(u)}.
$$
In order to formulate the next result, given a family  $(\SFP_p)$ and
$(\bar p,\bar x)\in\graph\Solv$, set
$$
  \ell_{{}_{C,A,Q}}(\bar p,\bar x) = \limsup_{(p,x)\to (\bar p,\bar x)\atop (p,x)\in\graph\Solv}
   \|\Gder{C}{p,x}(u) \cap
  \bigl[\nabla A(p,x)(u,\cdot)^{-1}\left( \Ider{Q}{p,A(p,x)}(u)\right)\bigl]\|_-,
$$
where $\limsup_{(p,x)\to (\bar p,\bar x)\atop (p,x)\in\graph\Solv}$ meas that
$(p,x)\to (\bar p,\bar x)$ while satisfying the inclusion $(p,x)\in\graph\Solv$.

\begin{corollary}[Sufficient derivative condition for Aubin property]
With reference to a family $(\SFP_p)$, let $(\bar p,\bar x)\in\graph\Solv$.
Suppose that:
\begin{itemize}
  \item[(i)] $C:\Pb\rightrightarrows\X$ is l.s.c. in a neighbourhood of $(\bar p,\bar x)$;
  \item[(ii)] $Q:\Pb\rightrightarrows\Y$ is l.s.c. in a neighbourhood of $(\bar p,A(\bar p,\bar x))$;
  \item[(iii)] $A$ is Fr\'echet differentiable in a neighbourhood of $(\bar p,\bar x)$;
  \item[(iv)] $(\DRC)$ holds near $(\bar p,\bar x)$;
  \item[(v)] $\Solv$ is locally closed.
\end{itemize}
Then, it holds
\begin{equation}  \label{in:elllip}
   \Lip{\Solv}{\bar p}{\bar x}\le \ell_{{}_{C,A,Q}}(\bar p,\bar x).
\end{equation}
Therefore, if $\ell_{{}_{C,A,Q}}(\bar p,\bar x)<+\infty$ then $\Solv$ satisfies the Aubin property
at $(\bar p,\bar x)$.
\end{corollary}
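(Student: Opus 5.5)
The natural route is the graphical-derivative criterion for the Aubin property of locally closed multifunctions in Banach spaces. This is the Aubin--Frankowska type criterion, also found in \cite{DonRoc14} (Theorem 4B.2 for finite dimensions) and \cite[Chapter 5]{AubFra90} for Banach spaces. In its quantitative form it reads
$$
  \Lip{\Solv}{\bar p}{\bar x}\le \limsup_{(p,x)\to(\bar p,\bar x)\atop (p,x)\in\graph\Solv}\|\Gder{\Solv}{p,x}\|_-,
$$
valid when $\graph\Solv$ is locally closed. Hypothesis (v) is included precisely to make this criterion applicable. The plan is to bound the right-hand side by $\ell_{{}_{C,A,Q}}(\bar p,\bar x)$, via a pointwise comparison of inner norms derived from Theorem \ref{thm:inapproxDSolv}.

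\textbf{Step 1: transfer the hypotheses to nearby points.} I would show that there is a neighbourhood $U$ of $(\bar p,\bar x)$ such that the hypotheses of Theorem \ref{thm:inapproxDSolv} hold at every $(p,x)\in\graph\Solv\cap U$.
\begin{itemize}
\item Conditions (i)--(iii) pass to nearby points directly, because the corollary assumes them on a neighbourhood.
\item Condition (iv) needs an argument. $(\DRC)$ near $(\bar p,\bar x)$ gives $\tau(\delta)>0$, where the infimum defining $\tau$ runs over the ball $\ball{\bar p}{\delta}\times\ball{\bar x}{\delta}$.
\item For $(p,x)$ with $\|p-\bar p\|,\|x-\bar x\|<\delta/2$, the ball of radius $\delta/2$ around $(p,x)$ lies inside that ball. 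The corresponding infimum is taken over a subset, so it is at least $\tau(\delta)>0$.
\item Hence $(\DRC)$ holds near every such $(p,x)$, with the same positive constant. I expect this uniformity to matter, although only the inclusion itself is needed below.
\end{itemize}

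\textbf{Step 2: compare inner norms.} Theorem \ref{thm:inapproxDSolv}, inclusion (\ref{GderSolvia1}), applied at each $(p,x)\in\graph\Solv\cap U$, gives for every $u\in\Uball$
$$
 \Gder{\Solv}{p,x}(u)\supseteq \Gder{C}{p,x}(u)\cap\bigl[\nabla A(p,x)(u,\cdot)^{-1}\left(\Ider{Q}{p,A(p,x)}(u)\right)\bigr].
$$
A larger set has smaller distance from $\nullv$, so $\dist{\nullv}{\Gder{\Solv}{p,x}(u)}$ is bounded by the distance of $\nullv$ from the right-hand set. This also holds when the right-hand set is empty, where the bound is $+\infty$. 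Taking the supremum over $u\in\Uball$ gives $\|\Gder{\Solv}{p,x}\|_-\le\|\cdot\|_-$ of the data multifunction. Taking the $\limsup$ along $\graph\Solv$ then gives
$$
\limsup_{(p,x)\to(\bar p,\bar x)\atop (p,x)\in\graph\Solv}\|\Gder{\Solv}{p,x}\|_-\le\ell_{{}_{C,A,Q}}(\bar p,\bar x).
$$
Combining this with the criterion yields (\ref{in:elllip}). The final claim follows because the Aubin property holds iff $\Lip{\Solv}{\bar p}{\bar x}<+\infty$.

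\textbf{Main obstacle.} The delicate part is invoking the derivative criterion correctly in Banach spaces rather than in $\R^n$. The Banach-space version (Aubin--Frankowska, via Ekeland's principle) needs local closedness of the graph, and hypothesis (v) provides it. It may also need completeness of $\Pb$ and $\X$, which are Banach here.

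If only a qualitative version of the criterion is available, I would instead reprove the quantitative estimate directly. The approach would be Ekeland's variational principle applied to $x\mapsto\|x-x_0\|$ restricted to $\Solv(p_1)$, using the finiteness of the inner norms to produce descent directions. The choice of (\ref{GderSolvia1}) rather than (\ref{GderSolvia2}) is dictated by the definition of $\ell_{{}_{C,A,Q}}$.
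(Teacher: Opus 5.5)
Your proposal is correct and is essentially the paper's proof: the paper invokes the primal contingent-derivative criterion for the Aubin property of locally closed multifunctions (Ioffe's Corollary 5.23), then bounds $\|\Gder{\Solv}{p,x}\|_-$ at graph points near $(\bar p,\bar x)$ through inclusion (\ref{GderSolvia1}) and passes to the $\limsup$, exactly as in your Step 2. Your Step 1 spells out what the paper leaves silent (shrinking the balls in $(\DRC)$, and, for (ii), continuity of $A$ at $(\bar p,\bar x)$ to keep $(p,A(p,x))$ where $Q$ is l.s.c.); the Ekeland fallback is unnecessary, and if you ever write it out, Ekeland should be applied on $\graph\Solv$ to $(p,x)\mapsto\|p-p_1\|$, not on $\Solv(p_1)$.
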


\begin{proof}
By applying the well-known primal criterion for the Aubin property of
locally closed multifunctions between Banach spaces (see \cite[Corollary 5.23]{Ioff17}),
one has
$$
  \Lip{\Solv}{\bar p}{\bar x}\le \limsup_{(p,x)\to (\bar p,\bar x)\atop (p,x)\in\graph\Solv}
  \|\Gder{\Solv}{p,x}\|_-.
$$
Since it is possible to employ the inner approximation of ${\rm D}_{\rm T}\Solv$ given
by $(\ref{GderSolvia1})$ at each point of $\graph\Solv$ in the neighbourhood of $(\bar p,\bar x)$,
one obtains
$$
   \|\Gder{\Solv}{p,x}\|_-\le \|\Gder{C}{p,x}(u) \cap
  \bigl[\nabla A(p,x)(u,\cdot)^{-1}\left( \Ider{Q}{p,A(p,x)}(u)\right)\bigl]\|_-.
$$
Then it suffices to pass to the $\limsup$ as $(p,x)\to (\bar p,\bar x)$, with $(p,x)\in\graph\Solv$.
\smartqed\qed
\end{proof}

It is worth noticing that the estimate in (\ref{in:elllip}) can be achieved only by
means of the inner approximation of ${\rm D}_{\rm T}\Solv$ in $(\ref{GderSolvia1})$
(a similar estimate can be achieved by employing $(\ref{GderSolvia2})$),
whereas the outer approximation given by Theorem \ref{thm:outapproxDSolv} can not
play any role here.

\vskip.5cm

%%%%%%%%%%%%%%%%%%%%%%%%%%%%%%%%%%%%%%%%%%%%%%%%%%%%%%%%%%%%%%%%%%%%%%%%%%%%%%%%%%%%%%%%%%%%%%%%%%

%\section{First-order approximation in dual spaces}     \label{Sect:}

%\vskip.5cm

%%%%%%%%%%%%%%%%%%%%%%%%%%%%%%%%%%%%%%%%%%%%%%%%%%%%%%%%%%%%%%%%%%%%%%%%%%%%%%%%%%%%%%%%%%%%%%%%%%%%%%%

\section{An application to mathematical programming
with ``split" feasibility constraints}     \label{Sect:4}

In the present section, constrained optimization problems of the following
form will be considered
$$
  \min \vartheta(p,x)
  \qquad\hbox{ subject to }\qquad x\in\Solv(p),  \leqno (\MPSFC)
$$
where $\Solv:\Pb\rightrightarrows\X$ is the solution map associated with
a family of parametric ``split" feasibility problems $(\SFP_p)$,
defined as in (\ref{eq:Solvpsetchar}), whereas $\vartheta:\Pb\times\X
\longrightarrow\R$ represents the objective function.
This kind of problems essentially differ from standard problems
of mathematical programming with traditional equality and inequality
constraints. Indeed, the objective function depends on two variables,
playing different roles. There is one kind of variable (the so-called
``state" variable, here denoted by $x$), whose feasibility is conditioned
by the other variable (sometimes called ``control" variable, here denoted
by $p$). More precisely, the exact determination of the feasible region of $(\MPSFC)$
requires the resolution of a lower level family of possibly involved
problems, such as $(\SFP_p)$ may be. Besides, such kind of constraints
is known to be a source of nonsmoothness when dealing with its functional characterization.
Reasonably, the study of optimality conditions for $(\MPSFC)$ benefits
from insights into the local behaviour of $\Solv$.

Under a locally Lipschitz assumption on the objective function $\vartheta$,
an approach to deriving necessary optimality conditions for $(\MPSFC)$
can be developed by a standard penalization technique.

\begin{proposition}    \label{pro:noptcond0}
Let $(\bar p,\bar x)\in\Pb\times\X$ be a local solution to problem $(\MPSFC)$.
Suppose that:
\begin{itemize}
  \item[(i)]  $C:\Pb\rightrightarrows\X$ is l.s.c. at $(\bar p,\bar x)$;

  \item[(ii)] $Q:\Pb\rightrightarrows\Y$ is l.s.c. at $\left(\bar p,A(\bar p,\bar x)\right)$;

  \item[(iii)] the qualification condition $(\DRC)$ holds near $(\bar p,\bar x)$,
  with $\tau(\delta)>0$;

  \item[(iv)] $\vartheta$ is locally Lipschitz around $(\bar p,\bar x)$, with
  constant $l_\vartheta>0$.
\end{itemize}
Then, for every $\lambda\ge \tau(\delta)^{-1}l_\vartheta$, $(\bar p,\bar x)$
is an unconstrained local minimizer of the function $\vartheta_\lambda:\Pb\times\X
\longrightarrow\R\cup\{\pm\infty\}$, given by
$$
  \vartheta_\lambda(p,x)=\vartheta(p,x)+\lambda\mf(p,x).
$$
\end{proposition}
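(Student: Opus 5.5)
This is the standard Clarke exact penalization argument, with Theorem \ref{thm:Solverbo} supplying the local error bound.

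First, hypotheses (i)--(iii) are exactly (i), (ii) and (iv) of Theorem \ref{thm:Solverbo}. The missing assumption there, the continuity of $A(\cdot,\bar x)$ at $\bar p$, has to be taken as implicit; I would note this, or obtain it from (iv)-type regularity of the data. Theorem \ref{thm:Solverbo} then yields $\eta,\zeta>0$ such that
$$
\dist{x}{\Solv(p)}\le\tau(\delta)^{-1}\mf(p,x),\quad\forall (p,x)\in\cball{\bar p}{\zeta}\times\cball{\bar x}{\eta}.
$$
Next, fix a radius $r>0$ with three properties. First, $\vartheta$ is $l_\vartheta$-Lipschitz on $\cball{\bar p}{2r}\times\cball{\bar x}{2r}$. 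Second, $\vartheta(\bar p,\bar x)\le\vartheta(p,x)$ for all $(p,x)\in\graph\Solv$ within distance $2r$ of $(\bar p,\bar x)$. Third, $r\le\min\{\zeta,\eta\}$. The product norm should be one for which the Lipschitz constant is measured compatibly; I use $\|(p,x)\|=\max\{\|p\|,\|x\|\}$ or the sum norm, adjusting if needed.

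Now take any $(p,x)$ with $\|p-\bar p\|\le r$ and $\|x-\bar x\|\le r$, and any $\varepsilon>0$. Pick $x'\in\Solv(p)$ with $\|x-x'\|\le\dist{x}{\Solv(p)}+\varepsilon$; the error bound guarantees $\Solv(p)\neq\varnothing$. We need $x'$ to stay in the region where local optimality and the Lipschitz estimate apply. This is the main technical obstacle. To handle it, use $\dist{x}{\Solv(p)}\le\|x-\bar x\|+\dist{\bar x}{\Solv(p)}$. By the error bound together with lower semicontinuity, $\dist{\bar x}{\Solv(p)}\to 0$ as $p\to\bar p$; indeed $\mf(p,\bar x)\to0$ by (i), (ii) and the continuity of $A(\cdot,\bar x)$. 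Hence, after shrinking $r$ (say to $r/3$ for the test points), $\|x'-\bar x\|\le 2r$. Then $(p,x')$ is feasible and close to $(\bar p,\bar x)$, so
$$
\vartheta(\bar p,\bar x)\le\vartheta(p,x')\le\vartheta(p,x)+l_\vartheta\|x-x'\|\le\vartheta(p,x)+l_\vartheta\bigl(\tau(\delta)^{-1}\mf(p,x)+\varepsilon\bigr).
$$
Letting $\varepsilon\downarrow0$ gives $\vartheta(\bar p,\bar x)\le\vartheta(p,x)+\tau(\delta)^{-1}l_\vartheta\,\mf(p,x)\le\vartheta_\lambda(p,x)$ for every $\lambda\ge\tau(\delta)^{-1}l_\vartheta$, since $\mf\ge0$. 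Finally, $\vartheta_\lambda(\bar p,\bar x)=\vartheta(\bar p,\bar x)$ because $\mf(\bar p,\bar x)=0$, which concludes the proof.
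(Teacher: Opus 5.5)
Your proof is correct. It follows the paper's two-step scheme: penalize $\vartheta$ by a distance to the feasible set, then apply the error bound (\ref{in:serbo}). The difference is that you carry out the penalization step by hand on the fiber $\Solv(p)$, whereas the paper cites Clarke's principle for the graph.

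In the paper, the penalty is $\dist{(p,x)}{\graph\Solv\cap\cball{(\bar p,\bar x)}{r}}$. By (\ref{eq:distdistball}) this equals $\dist{(p,x)}{\graph\Solv}$ near $(\bar p,\bar x)$, and only afterwards is it bounded by $\dist{x}{\Solv(p)}$ via (\ref{in:distgraphdist}). Localization comes for free there: $(\bar p,\bar x)$ itself lies in the graph, so nearly closest graph points automatically stay in the ball where $\vartheta$ is Lipschitz and $(\bar p,\bar x)$ is optimal.

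Working on the fiber keeps $p$ fixed, so the Lipschitz step only involves $\|x-x'\|$ and your concern about the product norm is moot. The price is an extra ingredient: the near-projection $x'$ must stay close to $\bar x$. You obtain this correctly from $\dist{\bar x}{\Solv(p)}\le\tau(\delta)^{-1}\mf(p,\bar x)\to 0$, i.e. lower semicontinuity of $\Solv$ at $(\bar p,\bar x)$. To make this step precise, choose the $p$-radius of the test neighbourhood small enough that $\dist{\bar x}{\Solv(p)}\le r$, not merely $r/3$.

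A case split would avoid this step entirely. If $\dist{x}{\Solv(p)}\ge\|(p,x)-(\bar p,\bar x)\|$, then $\lambda\mf(p,x)\ge l_\vartheta\dist{x}{\Solv(p)}$ already dominates the Lipschitz loss $l_\vartheta\|(p,x)-(\bar p,\bar x)\|$. Otherwise, $x'$ automatically stays within roughly twice the test radius of $\bar x$.

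Your remark about the continuity of $A(\cdot,\bar x)$ at $\bar p$ is well taken. Theorem \ref{thm:Solverbo} requires it, but it is absent from the statement, and it cannot be extracted from (iv), which concerns only $\vartheta$. So the paper's own appeal to (\ref{in:serbo}) tacitly assumes it as well.
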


\begin{proof}
According to the well-known principle of exact penalization valid
for locally Lipschitz functions (see, for instance, \cite[Proposition 2.4.3]{Clark83}),
by the hypothesis (iv), for a proper $r>0$, $(\bar p,\bar x)$ turns out to be a local solution
to the unconstrained problem
$$
  \min \left[\vartheta(p,x)+l\dist{(p,x)}{\graph\Solv\cap\cball{(\bar p,\bar x)}{r}}\right]
$$
for every $l\ge l_\vartheta$. Recall that by a general property of the
distance function valid in any metric space setting, one has
\begin{equation}   \label{eq:distdistball}
  \dist{(p,x)}{\graph\Solv\cap\cball{(\bar p,\bar x)}{r}}=\dist{(p,x)}{\graph\Solv},
  \quad\forall (p,x)\in\cball{(\bar p,\bar x)}{r/2}.
\end{equation}
On the other hand, it holds
\begin{eqnarray}   \label{in:distgraphdist}
  \dist{(p,x)}{\graph\Solv} &=& \inf_{(q,z)\in\graph\Solv}\|(p,x)-(q,z)\| \nonumber \\
  &=& \inf\{\|p-q\|+\|x-z\|:\ z\in\Solv(q)\} \nonumber \\
  &\le & \inf_{z\in\Solv(p)}\|x-z\|=\dist{x}{\Solv(p)},\quad\forall (p,x)\in\Pb\times\X.
\end{eqnarray}
By combining the equality in (\ref{eq:distdistball}) with the inequality
in (\ref{in:distgraphdist}), in the light of the error bound estimate in
(\ref{in:serbo}) one obtains
\begin{eqnarray*}
  \vartheta(\bar p,\bar x) &\le & \vartheta(p,x)+l\dist{(p,x)}{\graph\Solv} \\
   &\le &  \vartheta(p,x)+l\dist{(x)}{\Solv(p)}  \\
   &\le & \vartheta(p,x)+\tau(\delta)^{-1}l\mf(p,x),\quad\forall
   (p,x)\in \cball{(\bar p,\bar x)}{r/2}.
\end{eqnarray*}
This completes the proof.
\smartqed\qed
\end{proof}

Proposition \ref{pro:noptcond0} can be taken as a starting point for the formulation
of more elaborated necessary optimality conditions, which can be established with
the aid of adequate tools on nonsmooth analysis.

Below some necessary optimality conditions are established,
which are actually based on some of the results exposed in
Section \ref{Sect:3} and do not rely on the penalization technique
exploited above. In doing so, let us point out that they do not require any locally Lipschitz
assumption on $\vartheta$.

\begin{proposition}[Graphical optimality condition]    \label{pro:noptcond1}
Let $(\bar p,\bar x)\in\Pb\times\X$ be a local solution to problem $(\MPSFC)$.
Suppose that:
\begin{itemize}
  \item[(i)] $C$ and $Q$ are closed convex multifunctions;

  \item[(ii)] $A\in\Lin(\X,\Y)$ is onto;

  \item[(iii)] $\nullv\in\inte[\graph C-\graph(A^{-1}\circ Q)]$.
\end{itemize}
Then, it must be
\begin{equation}  \label{in:optcon1}
    -\Upsubd\vartheta(\bar p,\bar x)\subseteq
    \cl \biggl[\dcone{\graph\Gder{C}{\bar p,\bar x}}+
    A^*\left(\dcone{\graph\Gder{Q}{\bar p,A(\bar x)}}\right)\biggl].
\end{equation}
\end{proposition}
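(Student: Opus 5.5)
The plan is to combine a primal first-order necessary condition for minimizing a function over a set with the exact representation of Theorem \ref{thm:convspecase}, and then dualize through the calculus rule (\ref{eq:dconecalrule}).

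\emph{Step 1 (primal condition).} If $\Upsubd\vartheta(\bar p,\bar x)=\varnothing$ there is nothing to prove, so fix $v^*\in\Upsubd\vartheta(\bar p,\bar x)$. By the variational description recalled in Section \ref{Sect:2.3}, there is $\varsigma\ge\vartheta$ with $\varsigma(\bar p,\bar x)=\vartheta(\bar p,\bar x)$, Fr\'echet differentiable at $(\bar p,\bar x)$ with $\nabla\varsigma(\bar p,\bar x)=v^*$. Then $(\bar p,\bar x)$ is also a local minimizer of $\varsigma$ over $\graph\Solv$. Take any $(u,v)\in\Tang{\graph\Solv}{(\bar p,\bar x)}$, realized by $t_n\downarrow0$ and $(u_n,v_n)\to(u,v)$ with $(\bar p,\bar x)+t_n(u_n,v_n)\in\graph\Solv$. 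Dividing
\[
0\le\varsigma((\bar p,\bar x)+t_n(u_n,v_n))-\varsigma(\bar p,\bar x)
\]
by $t_n$ and letting $n\to\infty$ gives $\langle v^*,(u,v)\rangle\ge0$. Hence $-v^*\in\dcone{\Tang{\graph\Solv}{(\bar p,\bar x)}}=\dcone{\graph\Gder{\Solv}{\bar p,\bar x}}$. No Lipschitz property of $\vartheta$ is needed here.

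\emph{Step 2 (exact representation).} Hypotheses (i)--(iii) are exactly those of Theorem \ref{thm:convspecase}. By (\ref{eq:protodiffSolv}),
\[
\graph\Gder{\Solv}{\bar p,\bar x}=K\cap\mathcal{A}^{-1}(H),
\]
where $K=\graph\Gder{C}{\bar p,\bar x}$, $H=\graph\Gder{Q}{\bar p,A(\bar x)}$, and $\mathcal{A}(u,v)=(u,Av)$ as in the proof of Lemma \ref{lem:A-1Q}. Both $K$ and $H$ are closed convex cones, being Bouligand tangent cones to the closed convex graphs of $C$ and $Q$.

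\emph{Step 3 (dualization).} Apply (\ref{eq:dconecalrule}) in $\Pb\times\X$ with the operator $\mathcal{A}\in\Lin(\Pb\times\X,\Pb\times\Y)$:
\[
\dcone{[K\cap\mathcal{A}^{-1}(H)]}=\cl\left[\dcone{K}+\mathcal{A}^*(\dcone{H})\right].
\]
Combined with Step 1, this yields (\ref{in:optcon1}), where the $A^*$ in the statement is read as the adjoint $\mathcal{A}^*(q^*,y^*)=(q^*,A^*y^*)$ acting on the product. This is the natural meaning, since $\dcone{H}\subseteq\Pb^*\times\Y^*$.

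The main obstacle is this notational and technical point. The statement's $A^*$ must be interpreted as $\mathcal{A}^*$, i.e.\ the identity on the $\Pb^*$ component. One also has to check that (\ref{eq:dconecalrule}) requires only closed convex cones and a bounded linear operator, with no extra qualification, which is why the closure appears in (\ref{in:optcon1}). Finally, Step 1 must go through with only upper Fr\'echet regularity of $\vartheta$, and the smooth majorant $\varsigma$ handles this.
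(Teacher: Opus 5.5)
Your proposal is correct and matches the paper's proof step for step: the smooth majorant $\varsigma$ from the variational description of $\Upsubd\vartheta(\bar p,\bar x)$ yields nonnegativity of $\nabla\varsigma(\bar p,\bar x)$ on $\Tang{\graph\Solv}{(\bar p,\bar x)}$, and Theorem \ref{thm:convspecase} together with the dual-cone rule (\ref{eq:dconecalrule}) then gives (\ref{in:optcon1}). Your reading of $A^*$ as the product adjoint $\mathcal{A}^*(q^*,y^*)=(q^*,A^*y^*)$ is exactly what the paper's abbreviated notation $A^{-1}(\graph\Gder{Q}{\bar p,A(\bar x)})$ and $A^*(\cdot)$ implicitly means.
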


\begin{proof}
As $(\bar p,\bar x)\in\graph\Solv$ is a local solution to $(\MPSFC)$,
there exists $r>0$ such that
\begin{equation}    \label{in:locsolMPSFC}
   \vartheta(\bar p,\bar x)\le\vartheta(p,x),\quad\forall
   (p,x)\in\graph\Solv\cap\cball{(\bar p,\bar x)}{r}.
\end{equation}
Take an arbitrary $w^*\in\Upsubd\vartheta(\bar p,\bar x)$.
According to the variational representation of $\Upsubd\vartheta(\bar p,\bar x)$
recalled in Section \ref{Sect:2.2}, there is $\varsigma:\Pb\times\X\longrightarrow\R$,
with $\varsigma(\bar p,\bar x)=\vartheta(\bar p,\bar x)$ and $\varsigma(p,x)\ge\vartheta(p,x)$
for every $(p,x)\in\Pb\times\X$, which is Fr\'echet differentiable at $(\bar p,\bar x)$,
with $\nabla\varsigma(\bar p,\bar x)=w^*$.
From the inequality in (\ref{in:locsolMPSFC}), it follows
\begin{equation}    \label{in:locsolsigma}
   \varsigma(\bar p,\bar x)=\vartheta(\bar p,\bar x)\le
   \vartheta(p,x)\le\varsigma(p,x),\quad\forall
   (p,x)\in\graph\Solv\cap\cball{(\bar p,\bar x)}{r}.
\end{equation}
Now, if $(u,v)\in\Tang{\graph\Solv\cap\cball{(\bar p,\bar x)}{r}}{(\bar p,\bar x)}=
\Tang{\graph\Solv}{(\bar p,\bar x)}$, then there must exist $(u_n,v_n)_{n\in\N}$ in $\Pb\times\X$,
with $(u_n,v_n)\to(u,v)$ as $n\to\infty$, and $(t_n)_{n\in\N}$ in $(0,+\infty)$, with
$t_n\downarrow 0$ as $n\to\infty$, such that
$$
  (\bar p,\bar x)+t_n(u_n,v_n)\in\graph\Solv\cap\cball{(\bar p,\bar x)}{r},
  \quad\forall n\in\N.
$$
On account of the inequality in (\ref{in:locsolsigma}), by the Fr\'echet differentiability
of $\varsigma$ at $(\bar p,\bar x)$, the last inclusion leads to
$$
  0\le {\varsigma(\bar p+t_nu_n,\bar x+t_nv_n)-\varsigma(\bar p,\bar x)\over
  t_n} =\langle w^*,(u_n,v_n)\rangle + {o(\|t_n(u_n,v_n)\|)\over t_n},
  \quad\forall n\in\N,
$$
whence, by passing to the limit as $n\to\infty$, one obtains
\begin{equation}    \label{in:w*GderSolv}
   \langle w^*,(u,v)\rangle\ge 0,\quad \forall (u,v)\in\graph\Gder{\Solv}{\bar p,\bar x}.
\end{equation}
In the light of the exact representation of $\Gder{\Solv}{\bar p,\bar x}$ provided
by Theorem \ref{thm:convspecase}, which is valid
under the current assumptions, the inequality in (\ref{in:w*GderSolv}) implies
$$
  -w^*\in\dcone{\graph\Gder{\Solv}{\bar p,\bar x}}=
  \dcone{\bigl[\graph\Gder{C}{\bar p,\bar x}\cap A^{-1}(\graph\Gder{Q}{\bar p,A(\bar x)})\bigl]}.
$$
By virtue of the calculus rule for the dual cone recalled in (\ref{eq:dconecalrule}), one obtains
$$
   -w^*\in\cl \biggl[\dcone{\graph\Gder{C}{\bar p,\bar x}}+
    A^*\left(\dcone{\graph\Gder{Q}{\bar p,A(\bar x)}}\right)\biggl].
$$
By arbitrariness of $w^*\in\Upsubd\vartheta(\bar p,\bar x)$, this
completes the proof of the inclusion in the thesis.
\smartqed\qed
\end{proof}

To assess the impact of the optimality condition formulated in Theorem \ref{pro:noptcond1},
notice that no useful information is carried whenever $\Upsubd\vartheta(\bar p,\bar x)=\varnothing$.
This happens, for example, if $\vartheta$ is a convex continuous function,
which fails to be Fr\'echet differentiable at $(\bar p,\bar x)$. Helpfully, the upper subdifferential
is nonempty for large classes of functions, including the Fr\'echet differentiable functions,
concave continuous functions, and the semiconcave functions.
In all such cases, the necessary optimality condition provided by $(\ref{in:optcon1})$
may be more efficient than those expressed in terms of more traditional lower subdifferentials.
This because it requires
that all elements in $ -\Upsubd\vartheta(\bar p,\bar x)$ belong to the set in the right-side
of $(\ref{in:optcon1})$, in contrast to a mere nonempty intersection requirement,
which is typical for the lower subdifferential case (see on this
aspect the discussion in \cite[Section 5.1]{Mord06b}).

Under additional assumptions on the constraint system, the current approach allows one to
establish the below necessary optimality condition, which separates
the role of the two components of elements in $\Upsubd\vartheta(\bar p,\bar x)$.

\begin{proposition}   \label{pro:noptcond2}
Let $(\bar p,\bar x)\in\Pb\times\X$ be a local solution to problem $(\MPSFC)$.
Suppose that all the hypotheses of Proposition \ref{pro:noptcond1} are satisfied and
suppose, in addition, that one of the following conditions is satisfied for every $u\in\Pb$:
\begin{itemize}
  \item[$(iv_1)$] $\inte\left(\Gder{C}{\bar p,\bar x}(u)\right)\cap\left(A^{-1}(\Gder{Q}{\bar p,A(\bar x)}(u))\right)
  \ne\varnothing$;

  \item[$(iv_2)$] $\nullv\in\inte\bigl[\Gder{C}{\bar p,\bar x}(u)-\left(A^{-1}(\Gder{Q}{\bar p,A(\bar x)}(u))\right)\bigl]$
  and at least one among the sets $\Gder{C}{\bar p,\bar x}(u)$ and $A^{-1}(\Gder{Q}{\bar p,A(\bar x)}(u))$ is bounded.
\end{itemize}
Then, for every $w^*=(u^*,v^*)\in \Upsubd\vartheta(\bar p,\bar x)$ it must be
\begin{equation}  \label{in:optcon2}
    -u^*\in\partial\biggl(-[\supp{v^*}{-\Gder{C}{\bar p,\bar x}}\Box
    \supp{v^*}{-\Gder{Q}{\bar p,A(\bar x)}}]\biggl)(\nullv).
\end{equation}
\end{proposition}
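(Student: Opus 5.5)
The plan is to start from inequality (\ref{in:w*GderSolv}) in the proof of Proposition \ref{pro:noptcond1}. Its derivation uses only local optimality and the variational description of $\Upsubd\vartheta(\bar p,\bar x)$, so it holds verbatim here: for $w^*=(u^*,v^*)\in\Upsubd\vartheta(\bar p,\bar x)$,
$$
\langle u^*,u\rangle+\langle v^*,v\rangle\ge 0,\quad\forall (u,v)\in\graph\Gder{\Solv}{\bar p,\bar x}.
$$
Theorem \ref{thm:convspecase} applies under the hypotheses of Proposition \ref{pro:noptcond1}. Hence, for every fixed $u\in\Pb$, the inequality holds for all $v\in\Gder{C}{\bar p,\bar x}(u)\cap A^{-1}(\Gder{Q}{\bar p,A(\bar x)}(u))$. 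Taking the infimum over $v$ gives
$$
\langle u^*,u\rangle\ge -\inf_v\langle v^*,v\rangle=\supp{v^*}{-[\Gder{C}{\bar p,\bar x}(u)\cap A^{-1}(\Gder{Q}{\bar p,A(\bar x)}(u))]}.
$$
I read $\supp{v^*}{-\Gder{C}{\bar p,\bar x}}$ in (\ref{in:optcon2}) as the function $u\mapsto\supp{v^*}{-\Gder{C}{\bar p,\bar x}(u)}$. In the $Q$-term I read it with $A^{-1}$ implicit, i.e. as $u\mapsto\supp{v^*}{-A^{-1}(\Gder{Q}{\bar p,A(\bar x)}(u))}$, since the intersection lives in $\X$. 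The convention $\supp{\cdot}{\varnothing}=-\infty$ makes the inequality trivial when the intersection is empty.

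The second step splits the support function of the intersection through (\ref{eq:suppinter}). For each $u$, the sets $\Omega_1=-\Gder{C}{\bar p,\bar x}(u)$ and $\Omega_2=-A^{-1}(\Gder{Q}{\bar p,A(\bar x)}(u))$ are closed and convex. They are the values of the closed convex processes $\Gder{C}{\bar p,\bar x}$ and $A^{-1}\circ\Gder{Q}{\bar p,A(\bar x)}$, whose graphs are tangent cones to convex sets (Lemma \ref{lem:A-1Q}). Conditions $(iv_1)$ and $(iv_2)$ are invariant under the reflection $\Omega\mapsto-\Omega$, so they give exactly (i) or (ii) preceding (\ref{eq:suppinter}). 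One point needs care: in (ii) the bounded set must be $\Omega_2$. If instead $\Gder{C}{\bar p,\bar x}(u)$ is the bounded one, I swap the roles, using that $\Box$ is commutative and that $\nullv\in\inte(\Omega_1-\Omega_2)$ iff $\nullv\in\inte(\Omega_2-\Omega_1)$. Therefore $\langle u^*,u\rangle\ge\psi(u)$ for all $u\in\Pb$, where
$$
\psi(u)=[\supp{v^*}{-\Gder{C}{\bar p,\bar x}}\Box\supp{v^*}{-\Gder{Q}{\bar p,A(\bar x)}}](u).
$$

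The third step checks that $\psi$ is concave and positively homogeneous, with $\psi(\nullv)=0$. By Remark \ref{rem:concsupF}, each map $u\mapsto\supp{v^*}{F(u)}$ is concave and positively homogeneous when $F$ is a convex, positively homogeneous multifunction. Here $F=-\Gder{C}{\bar p,\bar x}$ and $F=-A^{-1}\circ\Gder{Q}{\bar p,A(\bar x)}$. For fixed $u$, $\psi(u)$ equals $\supp{v^*}{-[\Gder{C}{\bar p,\bar x}(u)\cap A^{-1}(\Gder{Q}{\bar p,A(\bar x)}(u))]}$, which is again of this form, since the intersection of two convex processes is a convex process. So $\psi$ is concave and positively homogeneous. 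At $u=\nullv$ both values are cones containing $\nullv$; the inequality $\langle u^*,\nullv\rangle\ge\psi(\nullv)$ and positive homogeneity force $\psi(\nullv)=0$, ruling out $+\infty$. Then $\langle u^*,u-\nullv\rangle\ge\psi(u)-\psi(\nullv)$ for all $u$, which says $u^*$ is a supergradient of the concave $\psi$ at $\nullv$. Equivalently, $-u^*\in\partial(-\psi)(\nullv)$ in the sense of convex analysis, which is (\ref{in:optcon2}).

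I expect the main obstacle to be the second step: matching $(iv_1)$/$(iv_2)$ to the hypotheses of (\ref{eq:suppinter}) after the sign change, and getting the argument order right so that the convolution acts on $v^*$ for each fixed $u$. A lesser issue is handling $u$ outside the domains of the two processes, where the values are $\pm\infty$. For those $u$ I would verify the inequality directly from the convention $\supp{\cdot}{\varnothing}=-\infty$.
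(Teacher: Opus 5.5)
Your proposal is correct and follows the paper's proof step by step: the inequality (\ref{in:w*GderSolv}), the exact representation from Theorem \ref{thm:convspecase}, the splitting (\ref{eq:suppinter}) under $(iv_1)$/$(iv_2)$, and concavity and positive homogeneity via Remark \ref{rem:concsupF}. The only real difference is the last step, where you read off $-u^*\in\partial(-\psi)(\nullv)$ directly from $\psi\le\langle u^*,\cdot\rangle$ and $\psi(\nullv)=0$, while the paper applies Fermat's rule to the sublinear function $\langle u^*,\cdot\rangle-\psi$ and then the Moreau--Rockafellar sum rule; you also fill in details the paper leaves tacit, namely which set must be bounded in (ii) and the implicit $A^{-1}$ in the $Q$-term.
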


\begin{proof}
Fix an arbitrary $w^*=(u^*,v^*)\in \Upsubd\vartheta(\bar p,\bar x)$.
Under the above assumptions, the inequality in (\ref{in:w*GderSolv}) says
that it must be
$$
   \langle w^*,(u,v)\rangle=\langle u^*,u\rangle+\langle v^*,v\rangle\ge 0,
   \quad \forall v\in\Gder{\Solv}{\bar p,\bar x},\
   \forall u\in\Pb.
$$
By recalling the exact representation of $\Gder{\Solv}{\bar p,\bar x}$
provided in (\ref{GderSolvexact}), from the above inequality one obtains
\begin{eqnarray}    \label{in:u*useparv*v}
  \langle u^*,u\rangle &\ge & \sup_{v\in -\Gder{\Solv}{\bar p,\bar x}(u)}\langle v^*,v\rangle
  =\supp{v^*}{ -\Gder{\Solv}{\bar p,\bar x}(u)}     \nonumber \\
   &=& \supp{v^*}{-\Gder{C}{\bar p,\bar x}(u) \cap  -A^{-1}\left( \Gder{Q}{\bar p,A(\bar x)\right)}(u)},
   \quad\forall u\in\Pb.
\end{eqnarray}
Observe that both the set $\Gder{C}{\bar p,\bar x}(u)$ and $-A^{-1}\left( \Gder{Q}{\bar p,A(\bar x)\right)}(u)$
are closed and convex. Thus, under one of the hypotheses $(iv_1)$ or $(iv_2)$, the exact behaviour
of support functions recalled in (\ref{eq:suppinter}) allows one to write
\begin{eqnarray*}
  \supp{v^*}{-\Gder{C}{\bar p,\bar x}(u) \cap  -A^{-1}\left( \Gder{Q}{\bar p,A(\bar x)\right)}(u)}  =   \\
  \supp{v^*}{-\Gder{C}{\bar p,\bar x}(u)}\Box \supp{v^*}{ -A^{-1}\left( \Gder{Q}{\bar p,A(\bar x)\right)}(u)},
  \quad\forall u\in\Pb.
\end{eqnarray*}
Now, notice that, since $v^*$ remains fixed, both the functions
$$
  u\mapsto\supp{v^*}{-\Gder{C}{\bar p,\bar x}(u)} \quad\hbox{ and }\quad
  u\mapsto\supp{v^*}{ -A^{-1}\left( \Gder{Q}{\bar p,A(\bar x)\right)}(u)}
$$
are concave and positively homogeneous on $\Pb$, because under the current assumptions
the set-valued maps $\Gder{C}{\bar p,\bar x}:\Pb\rightrightarrows\X$ and
$A^{-1}\circ \Gder{Q}{\bar p,A(\bar x)}:\Pb\rightrightarrows\X$ are closed, convex
and positively homogeneous (remember Remark \ref{rem:concsupF}).
From the inequality in (\ref{in:u*useparv*v}),
it is possible to deduce that the sublinear function
$$
  u\mapsto \langle u^*,u\rangle -[\supp{v^*}{-\Gder{C}{\bar p,\bar x}(u)}\Box
  \supp{v^*}{ -A^{-1}\left( \Gder{Q}{\bar p,A(\bar x)\right)}(u)}]
$$
admits a (global) minimizer at $\nullv$. Consequently, according
to a well-known optimality condition from convex analysis, it must be
$$
 \nullv^*\in\partial\biggl(u^*-[\supp{v^*}{-\Gder{C}{\bar p,\bar x}}\Box
    \supp{v^*}{-\Gder{Q}{\bar p,A(\bar x)}}]\biggl)(\nullv).
$$
Then, the inclusion in the thesis becomes an obvious consequence
of the Moreau-Rockafellar sum rule for the subdifferential of convex
analysis.
\smartqed\qed
\end{proof}

\vskip.5cm

%%%%%%%%%%%%%%%%%%%%%%%%%%%%%%%%%%%%%%%%%%%%%%%%%%%%%%%%%%%%%%%%%%%%%%%%%%%%%%%%%%%%%%%%%%%%%%%%%%%%%%%

\section{Conclusions}

The results established in the paper provide formulae for
estimating the first-order contingent derivative of the solution set-valued map associated
with a family of parameterized ``split" feasibility problems.
A case is singled out, in which these formulae are able to give an exact representation
of the contingent derivative, whereas in general they result in one-side approximations.
Such kind of results appear as a
quantitative statement in the formulation of implicit multifunction theorems, as conceived
in modern variational analysis, and complement the perturbation
analysis of ``split" feasibility problems, as started in \cite{HuXuYe25,HuXuYe25b}
and continued in \cite{Uder26}, where all problem data are considered to be subject to perturbation.
Similarly as in classical implicit function theorems, the achieved
formulae are built on (various kinds of) derivatives of the problem data.
Some of them work under qualification conditions, which are expressed
in terms of convex analysis constructions.

Since the early development of the Euler-Lagrange theory of optimization
problems, the first-order approximation of solution sets to constraint
systems was well understood to play a crucial role in deriving optimality conditions.
The results exposed in the applicative section of the paper provide
some proposals for addressing such a connection in the specific context
of mathematical programming with `split" feasibility constraints.

\vskip.5cm

%%%%%%%%%%%%%%%%%%%%%%%%%%%%%%%%%%%%%%%%%%%%%%%%%%%%%%%%%%%%%%%%%%%%%%%%%%%%%%%%%%%%%%%%%%%%%%%%%%%%%%%

%\begin{acknowledgements}
%\end{acknowledgements}

%%%%%%%%%%%%%%%%%%%%%%%%%%%%%%%%%%%%%%%%%%%%%%%%%%%%%%%%%%%%%%%%%%%%%%%%%%%%%%%%%%%%%%%%%%%%%%%%%%%%%%

%References

\end{document}